\font\Bbb=msbm10
\def\BBB#1{\hbox{\Bbb#1}}
\theoremstyle{definition}
\newtheorem{definition}{Definition}[section]
\newtheorem{example}[definition]{Example}
\newtheorem{remark}[definition]{Remark}
\theoremstyle{plain}
\newtheorem{theorem}[definition]{Theorem}
\newtheorem{proposition}[definition]{Proposition}
\newtheorem{corollary}[definition]{Corollary}
\newtheorem{lemma}[definition]{Lemma}
\newcommand\cost{\hbox{\rm Cost}}
\newcommand\CC{{\mathcal C}}
\newcommand\M{{\mathcal M}}
\newcommand\HH{{\mathbb H}}
\newcommand\R{{\mathbb R}}
\newcommand\Ham{{\mathcal H}}
\newcommand\htx{{\widehat{t}_X}}
\newcommand\hty{{\widehat{t}_Y}}
\newcommand\hsx{{\widehat{s}_X}}
\newcommand\hsy{{\widehat{s}_Y}}
\newcommand\oo{{o}}
\newcommand\ad{\hbox{\rm ad}}
\newcommand\mybox{
\hbox{
{\kern -8pt}
{\rule [-2pt]{1pt}{13pt}}
{\kern -8pt}
{\rule [-2pt]{14pt}{1pt}}
{\kern -22pt}
{\rule [10pt]{14pt}{1pt}}
{\kern -8pt}
{\rule [-2pt]{1pt}{13pt}}
}}
\newcommand{\LeftEqNo}{\let\veqno\@@leqno}
\def\bU{\overline{U}}
\def\H{{\BBB H}}
\def\ep{\varepsilon}
\def\px{x}
\def\ad{\hbox{\rm ad}}
\date{}
\begin{document}

\title
[Optimal attitude control]
{Optimal attitude control with two rotation axes}
\author{Yuly Billig}
\address{School of Mathematics and Statistics, Carleton University, Ottawa, Canada}
\email{billig@math.carleton.ca}


\

\

\

\begin{abstract}
Euler proved that every rotation of a 3-dimensional body can be realized as a sequence of three
rotations around two given axes. If we allow sequences of an arbitrary length, such a decomposition
will not be unique. In this paper we solve an optimal control problem minimizing the total angle
of rotation for such sequences. We determine the list of possible optimal patterns that give
a decomposition of an arbitrary rotation. Our results may be applied to the attitude control of 
a spacecraft with two available axes of rotation.
\end{abstract}

\maketitle

\section{Introduction}

 In this paper we investigate the problem of optimal attitude control of a 3-dimensional body which
can be rotated around two fixed axes. The problem goes back to Euler \cite{E} who proved in 1776
that an arbitrary rotation $g$ of a 3-dimensional body may be factored as
\begin{equation}
\label{Eu}
g = R(t_1 Y) R(t_2 X) R(t_3 Y),
\end{equation}
where $R(t X)$ (resp. $R(tY)$) is a rotation in angle $t$ around $X$-axis (resp. $Y$-axis). The parameters $t_1, t_2, t_3$
are called the Euler's angles.

 We could allow decompositions for $g$ with more factors:
\begin{equation}
\label{Eutwo}
g = R(t_1 Y) R(t_2 X) \ldots R(t_{n-1} Y) R(t_n X).
\end{equation}
Clearly we will get infinitely many such decompositions for a fixed element $g$ in the group $SO(3)$ of rotations.
Thus it is natural to pose the question of finding a decomposition (\ref{Eutwo}) that minimizes the total angle
of rotation $|t_1| + |t_2| + \ldots + |t_n|$. It can happen that decompositions with more factors have a smaller 
total angle of rotation than the Euler's decomposition (\ref{Eu}).

It turns out that this problem is not well-posed: for some $g$ an optimal decomposition (\ref{Eutwo}) does not
exist. Instead, the infimum of the total angle of rotation is attained as a limit on a sequence of decompositions
(\ref{Eutwo}) with $n \to\infty$.

We can overcome this difficulty by noting that
\begin{equation*}
R (aX + bY) = \lim\limits_{n\to\infty} \left( R \left( \frac{a}{n} X\right) R\left(\frac{b}{n} Y\right) \right)^n, 
\end{equation*}
hence it is natural to extend the set of controls from $\{ \pm X, \pm Y \}$ to
$\CC = $ \break
$\left\{ aX + bY \ \big| \ |a| + |b| = 1 \right\}$. Implementing a rotation $R(aX + bY)$ corresponds to
carrying out rotations around axes $X$ and $Y$ simultaneously with the ratio $a:b$ of angular velocities.

Once we extend the control set, our optimization problem becomes well-posed and for every $g \in SO(3)$
there is an optimal decomposition with a finite number of factors.

We study this problem in a more general setting, where we allow an arbitrary angle $0< \alpha \leq \frac{\pi}{2}$ between the
axes $X$ and $Y$. We also introduce a more general cost function to be minimized, where a rotation in angle $t$ around
$Y$-axis has the same cost as a rotation around $X$-axis in angle $\kappa t$, $0 \leq \kappa \leq 1$.

We solve the optimization problem in this greater generality and determine possible patterns for the optimal 
decompositions. Each of these patterns has (at most) 3 independent time parameters, and it is fairly easy
to find  numerically the decompositions of a given element $g \in SO(3)$ according to each pattern. This
produces a finite number of decompositions and we can immediately see which one of them is optimal.

It happens that our optimization problem has a bifurcation at $\kappa = \cos \alpha$. For the cases
$\kappa > \cos \alpha$ and $\kappa < \cos \alpha$ we get different lists of optimal patterns. There are
also special cases when $\kappa = 0$ or $\cos \alpha = 0$.

Let us present the list of optimal patterns in case when the axes $X$ and $Y$ are perpendicular to each other and $\kappa = 1$.
Since in this case the problem is symmetric with respect to the dihedral group of order $8$, generated by transformations
$(X, Y) \mapsto (Y, X)$, $(X, Y) \mapsto (-X, Y)$, $(X, Y) \mapsto (X, -Y)$, the list of patterns will also be symmetric with respect
to this group. We denote this group of $8$ symmetries by $(X, Y) \mapsto \{ \pm X, \pm Y \}$ and use it to present the list of
patterns in a more compact form.

\begin{theorem}
Let the angle between the axes $X$ and $Y$ be $\alpha = \frac{\pi}{2}$ and let $\kappa = 1$.
For an element $g \in SO(3)$ there is an optimal decomposition with $t_1, t_2, t_3 \geq 0$ of one of the following types:
\begin{align*}
& R(t_1 X) R(t_2 Y) R(-t_3 X), \hspace{1.9cm} \text{with \ } t_1, t_3 \leq t_2 \leq \pi, \\
& R(t_1 X) R(t_2 Y) R(-t_2 X) R(-t_3 Y), \hspace{0.3cm} \text{with \ } t_1, t_3 \leq t_2 \leq \pi, \\
& R(t_1 X) R\left( t_2 \left( {X+Y} \right)/2 \right) R(t_3 X), \hspace{0.4cm} \text{with \ } t_1, t_3 \leq \pi, \  t_2 \leq \sqrt{2} \pi,  \\
& R(t_1 X) R\left( t_2 \left( {X+Y} \right)/2 \right) R(t_3 Y), \hspace{0.4cm} \text{with \ }  t_1, t_3 \leq \pi, \  t_2 \leq \sqrt{2} \pi,  
\end{align*}
and symmetric to these under the group of transformations $(X, Y) \mapsto \{ \pm X, \pm Y \}$.
\end{theorem}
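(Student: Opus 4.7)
The plan is to derive the theorem from the Pontryagin Maximum Principle (PMP) applied to the right-invariant system $\dot g = g\,(aX + bY)$ on $SO(3)$, with controls in $\CC$ and cost equal to total time (since $|a|+|b|=1$ identifies the cost with $T$). First I would pass to the body-frame costate $\lambda \in so(3)^* \cong \R^3$, write $p = \langle \lambda, X\rangle$, $q = \langle \lambda, Y\rangle$, and form the PMP Hamiltonian $H(\lambda,a,b) = ap + bq$. Maximizing over the $\ell^1$-unit disk $\CC$ yields $\max_{\CC} H = \max(|p|, |q|)$, attained at the vertices $(\pm 1, 0)$ or $(0, \pm 1)$ whenever $|p|\neq|q|$, and on the edges $(a,b) = \pm\bigl(\tfrac12, \pm\tfrac12\bigr)$ only when the equality $|p|=|q|$ persists in time (singular arc). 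Therefore every optimal control is a concatenation of pieces of the form $\pm X$, $\pm Y$, or $\pm(X\pm Y)/2$.

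Next I would analyze the costate evolution $\dot\lambda = \lambda \times u$, which preserves both $|\lambda|^2$ (Casimir) and $H$ (autonomy). During an $X$-arc the coordinate $p$ is constant while $q$ oscillates sinusoidally on the circle $|\lambda|^2 - p^2 = \text{const}$; symmetrically for $Y$-arcs. On a singular arc along the diagonal $\pm(X\pm Y)/2$, both $p$ and $q$ are stationary because $(p,q)$ lies in the fixed plane of the infinitesimal rotation around $X+Y$. Bang switches occur precisely when $|p|=|q|$, and the direction of the next control is dictated by the sign of the larger coordinate after the switch. This reduces the whole classification problem to a piecewise-constant planar dynamics for $(p,q)$, transitioning only along the diagonals $|p|=|q|$.

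With this reduction I would enumerate admissible extremals. Using $|\lambda|^2$, $H$ conservation and the fact that each bang-arc rotates $\lambda$ by the same signed angle around the control axis, one checks that a bang-bang extremal admits at most three switches before $(p,q)$ returns to its starting angle; the associated $g(t)$ loop then yields a strictly shorter variation unless the pattern is $XYX$ (three pieces, Pattern 1) or $XYXY$ with forced equality $t_2 = t_3$ of the interior angles (Pattern 2). For bang-singular-bang extremals, the Lie-bracket structure of $X$, $Y$, and $(X\pm Y)/2$ forces the two bang-pieces to lie on distinct principal axes, giving exactly Patterns 3 and 4 up to symmetry. The length bounds $t_1, t_3 \leq \pi$ are automatic from $R(t)=R(t-2\pi)$ replacement, and $t_2 \leq \sqrt{2}\pi$ along the diagonal is the same statement applied to the axis $(X+Y)/2$ of length $\sqrt{2}/2$. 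The remaining inequalities $t_1, t_3 \leq t_2$ in Patterns 1 and 2 come from a second-order comparison: if, say, $t_1 > t_2$, then sliding a portion of the first arc through the switching point strictly decreases the cost while preserving the endpoint.

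The main obstacle will be this last step, namely the combined bookkeeping that (a) caps the number of bang switches at three and (b) yields the sharp monotonicity $t_1, t_3 \leq t_2$ inside each pattern, all while handling the eight-fold dihedral symmetry carefully so that apparently different sign-choices collapse to the four listed patterns. To finish the proof of existence of such a decomposition for every $g\in SO(3)$, I would argue by dimension count: each of the four patterns carves out a three-dimensional family in $SO(3)$, and the union of these families under the group $(X,Y)\mapsto\{\pm X,\pm Y\}$ is closed and meets every fibre of the exponential map; closedness combined with compactness of $SO(3)$ and lower semicontinuity of the cost then forces the infimum to be realised by one of the catalogued patterns.
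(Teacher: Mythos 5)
Your plan follows the same broad outline as the paper — PMP on $SO(3)$ with controls in the $\ell^1$-ball, bang arcs $\pm X,\pm Y$ plus singular arcs along $\pm(X\pm Y)$, costate reduction, and an enumeration of extremals — and many of the easy observations (e.g.\ the $t_2\le\sqrt2\pi$ bound coming from $\lvert(X+Y)/2\rvert=\sqrt2/2$) are correct. However there is a genuine gap at the central step, the bound on the number of switches.

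You assert that because the costate coordinates $(p,q)$ return to their starting angle after a fixed number of switches, ``the associated $g(t)$ loop then yields a strictly shorter variation.'' This is not true: the costate dynamics $\dot\lambda=\lambda\times u$ is periodic, but the group trajectory $g(t)$ does \emph{not} close up after one period of $(p,q)$ (this is precisely why extremals can in principle have arbitrarily many switches). PMP is a first-order necessary condition, and there are arbitrarily long bang-bang extremals for this problem; ruling them out requires an argument beyond PMP. The paper does this with a second-order variational calculation (Proposition~\ref{epsfive}): it uses the Campbell--Hausdorff expansion in $SU(2)$ to show explicitly that $R(-\delta X)R(t_YY)R(t_XX)R(-t_YY)R(-\delta X)$ has cost that decreases at order $\delta^2$ under an admissible perturbation, hence five-piece extremals are not optimal. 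Without some such quantitative estimate your claimed cap of three switches is unsupported, and the associated conclusion $t_1,t_3\le t_2$ (a consequence of the subword structure of the surviving extremal) is likewise not established. Similarly, your treatment of singular arcs skips the nontrivial point handled by Propositions~\ref{Wconj}--\ref{fiveW} of the paper: that one can merge all singular pieces into a single $R(tW_+)$ factor (this uses the algebraic fact that $R(\htx X)R(\hty Y)$ is a half-turn with axis orthogonal to $W_+$), and that at most two bang pieces can precede or follow the singular arc. Finally, the ``dimension count plus closedness'' argument for existence is not a proof; the paper instead establishes existence (Proposition~\ref{existence}) by passing to the convex hull $\overline{\CC}$ and invoking a Filippov-type theorem, after which reparametrization by arc length returns to $\CC$.
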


\

\begin{example}
Suppose we would like to decompose a rotation $R(t Z)$ as a product of rotations around $X$- and $Y$-axes, where $\{ X, Y, Z\}$ is the 
standard orthogonal basis of $\R^3$. The pattern for the optimal decompositions will depend on the value of $t$. 
If $0 \leq t \leq  \frac{\pi}{2}$ then the following decomposition realizes the minimum of the total rotation angle:
\begin{multline*}
\hfill
R (t Z) = R( -t_1 X) R ( -t_2 Y) R( t_2 X) R( t_1 Y), \hfill \\
\text{where} \quad \quad
 t_1 = \arccos \left( \frac{1}{ \cos \left( \frac{t}{2} \right) + \sin  \left( \frac{t}{2} \right)} \right), \
 t_2 = \arccos \left( \cos \left( \frac{t}{2} \right) - \sin  \left( \frac{t}{2} \right) \right).
\hfill
\end{multline*}
For $ \frac{\pi}{2} \leq t \leq  \pi$ the Euler's decomposition (\ref{Eu}) becomes optimal:
\begin{equation*}
R (t Z) = R( - \frac{\pi}{2} Y) R ( t X) R(\frac{\pi}{2} Y).
\end{equation*}
When $t = \frac{\pi}{2}$ both patterns are optimal. For $-\pi < t < 0$ the optimal decompositions may be obtained by 
switching $X$ with $Y$ in the above expressions.
\end{example}

In case when the axes $X$ and $Y$ are perpendicular to each other and $\kappa = 0$ 
(meaning that rotations around $Y$-axis have zero cost), the optimal decompositions are precisely those
described by Euler (\ref{Eu}).

In 2009 NASA launched a space telescope Kepler with a mission of finding planets outside the Solar system. 
This spacecraft was placed in an orbit around the Sun. To take images of stars, the telescope needs to be pointed
in the target direction, with its solar panels facing the Sun. The attitude control of Kepler is done with reaction wheels,
which are heavy disks mounted on electric motors. Once the reaction wheel is turned, the spacecraft will turn around the
same axis in the opposite direction due to the angular momentum conservation law. 

If we have three reaction wheels  with linearly independent axes, by rotating them simultaneously with appropriate relative 
angular velocities, we can implement a continuous rotation of the spacecraft around an arbitrary axis.  For redundancy,
Kepler was equipped with four reaction wheels with their axes in a tetrahedral configuration, so that any three of them
could provide an efficient attitude control. However by May 2013, two of the four reaction wheels failed, leaving 
Kepler with just two available axes of rotation \cite{NASA}. The results of our paper provide optimal methods
for attitude control with two rotation axes, like in situation with the Kepler space telescope.

 This paper builds on our previous work \cite{B}, where we studied a similar problem for $SU(2)$, also with two available controls,
but with a restriction that only a positive time evolution is allowed. That  paper was motivated by the applications to quantum
control in a 1-qubit system.

 In the present paper we use the geometric control theory \cite{J}, which is an adaptation of the Pontryagin's Maximum
Principle to the setting of Lie groups. The Maximum Principle provides only necessary conditions for optimality, which need
not be sufficient. In Section 3 we identify decompositions that satisfy the necessary conditions of the Pontryagin's Maximum Principle.
Then we go into a more detailed analysis in Section 4 by showing that decompositions with a large number of factors are
not optimal, even when they satisfy the conditions of the Maximum Principle. 
Our main results are stated in Theorems  \ref{czero} -- \ref{Meq} at the end of the next Section. 

\

{\bf Acknowledgements.} I thank Cornelius Dennehy, Ken Lebsock, Eric Stoneking and Alex Teutsch for the stimulating discussions.  
Support from the Natural Sciences and Engineering Research Council of Canada is gratefully acknowledged.

\

\section{Attitude control problem}

 For a unit vector $X \in \R^3$ denote by $R(tX)$ an operator of rotation of $\R^3$ in angle $t$ around $X$, with the plane perpendicular to $X$ turning counterclockwise when viewed from the endpoint of $X$. As a $3\times 3$ matrix, $R(tX)$ is given
by 
the formula:
\begin{equation*}
R(tX) =  \cos (t) I +  \sin (t) \ad X +  (1 - \cos (t))  X X^T ,
\end{equation*}  
where for $ X = (a, b, c)^T$ with $a^2 + b^2 + c^2 = 1$,  $\ad X$
is the adjoint matrix of $X$ with respect to the cross product, so that $ (\ad X) Y = X \times Y$:
\begin{equation}
\label{cross}
 \ad X  = \left( \begin{matrix} 0 & -c  & b \\ c & 0 & -a \\ -b & a & 0 \\ \end{matrix} \right) 
\quad
\text{ and }
\quad
X X^T = \left( \begin{matrix} a^2 & ab & ac \\ ab & b^2 & bc \\ ac & bc & c^2 \\
\end{matrix} \right) .
\end{equation}

For $X \in \R^3$ with $\left| X \right| \neq 1$ we set $R(tX) = R(t^\prime X^\prime)$, where $t^\prime = t \left| X \right|$ 
and $X^\prime = {X}/{\left| X \right|}$. The set of all rotations of $\R^3$ forms the group $SO(3)$.

For a fixed $X$ the set $\left\{ R(tX) | t \in \R \right\}$ is a 1-parametric subgroup in $SO(3)$.

It is well-known that for any two non-proportional unit vectors $X, Y$, the corresponding $1$-parametric subgroups together generate the whole group $SO(3)$. This means that every element $g \in SO(3)$ may be decomposed into a product
\begin{equation}
\label{dec}
g = R(t_1 C_1) R(t_2 C_2) \ldots R(t_n C_n)
\end{equation}
with $C_j \in \left\{ X, Y \right\}$. Decomposition (\ref{dec}) is of course not unique. It is then natural to consider the optimization
problem of finding the infimum of $\left| t_1 \right| + \ldots + \left| t_n \right|$ over all decompositions (\ref{dec}) with fixed
$g \in SO(3)$. More generally, we may assign cost to each generator $X, Y$ and minimize the total cost in (\ref{dec}). 

Introduction of the cost parameters may be warranted in case when the body that we control has unequal momenta of inertia with respect to the axes $X$ and $Y$, 
thus making it easier to rotate it around one of the axes.

 Without loss of generality, we assume that $\cost(X) \geq \cost(Y)$ and renormalize the cost function by fixing $\cost(X) = 1$,
$\cost(Y) = \kappa$ with $0 \leq \kappa \leq 1$.

 For the rest of the paper we fix two non-proportional vectors $X, Y \in \R^3$ with $\left| X \right| = \left| Y \right| =1$. An important parameter is the angle $\alpha$ between these vectors. Without loss of generality we assume $0 < \alpha \leq \frac{\pi}{2}$, otherwise we can replace $Y$ with $-Y$. Throughout the paper we will the use parameter $c = \cos(\alpha)$, $0 \leq c < 1$. Let $Z$ be a vector perpendicular to $X$ and $Y$, $Z = X \times Y$, $\left| Z \right| = \sin (\alpha)$.

  It could happen that the infimum of cost is not attained on any particular decomposition (\ref{dec}), but rather as a limit on a sequence of such decompositions with $n \to \infty$. It turns out that we can overcome this difficulty by enlarging the set of generators to be
\begin{equation*}
 \CC = \left\{ a X + b Y \big| \left| a \right| + \left| b \right| = 1 \right\} .
\end{equation*}

Note that rotations corresponding to elements of $\CC$ can be realized as limits of products of 
rotations with axes $\{ X, Y \}$:
\begin{equation}
\label{explim} 
R(t (aX + bY)) = \lim_{n \to \infty} \left( R\left( \frac{ta}{n} X\right)  R\left( \frac{tb}{n} Y\right) \right)^n . 
\end{equation}
From the point of view of the attitude control, this corresponds to turning on controls $X$ and $Y$ simultaneously with intensities $a$ and $b$ respectively.

We extend the definition of the cost function in such a way that the cost of both sides
of (\ref{explim}) is the same:
\begin{equation}
\label{costf}
\cost (a X + b Y) = \left| a \right| \cost(X) + \left| b \right| \cost(Y).
\end{equation}

 Our goal is to solve the following

{\bf Problem 1.} For a given $g \in SO(3)$ find a decomposition
$g = R(t_1 C_1) R(t_2 C_2) \ldots R(t_n C_n)$
with $C_1, \ldots, C_n \in \CC$, $t_1, \ldots, t_n \geq 0$,
realizing the the infimum of
$$t_1 \cost (C_1) + \ldots + t_n \cost(C_n). $$

 It was shown in \cite{B}, Theorem 1.4, that the infimum cost in this problem problem is the same as for its more restricted version where the set of controls is taken to be 
$\left\{ \pm X, \pm Y \right\}$ instead of $\CC$.
  
 In fact, we shall see that we would not need the whole set $\CC$, but require in addition to controls  $\left\{ \pm X, \pm Y \right\}$ only the elements  $\left\{ \pm W_+, \pm W_- \right\}$, where
$W_+$ (resp. $W_-$) is a linear combination of $X$ and $Y$, which is orthogonal to $\kappa X +Y$
(resp.  $\kappa X - Y$). 



Since the cost of $R((\lambda t) C)$ and $R(t (\lambda C))$ is the same, we can rescale the generators without changing the cost of decompositions.
We can thus drop the requirement $ \left| a \right| + \left| b \right| = 1$ for the generators $C = a X + b Y$. 

 We fix $W_+ = (1 + \kappa c) X - (\kappa + c) Y$ and $W_- = (1 - \kappa c ) X + (\kappa - c) Y$.
Taking into account that $(X, X) = (Y, Y) = 1$ and $(X, Y) = c$, it is easy to check that
$(W_+, \kappa X + Y) = 0$ and $(W_-, \kappa X - Y) = 0$. 


Now we can state the main results of the paper. It turns out that the problem we consider has a bifurcation at $\kappa = c$, and we need to consider the cases $0 \leq c < \kappa \leq 1$ and
$0 < \kappa \leq c < 1$ separately. There will be also a special case when $\kappa = 0$.

We will give the solution of the above optimal control problem by specifying the patterns of optimal decomposition (\ref{dec}).

We begin with some elementary observations. Obviously we may restrict all angles of 
rotation to be less or equal to $\pi$.

 If $g = R(t_1 C_1) R(t_2 C_2) \ldots R(t_n C_n)$ is an optimal decomposition then a decomposition
\begin{equation}
\label{sub}
R (t^\prime_k C_k) R(t_{k+1} C_{k+1}) \ldots R(t_{m-1} C_{m-1}) R(t^\prime_m C_m)
\end{equation}
with $1 \leq k \leq m \leq n$, $0 \leq t^\prime_k \leq t_k$, $0 \leq t^\prime_m \leq t_m$, is also optimal. We call (\ref{sub}) a {\it subword} of $R(t_1 C_1) R(t_2 C_2) \ldots R(t_n C_n)$.
We shall present the optimal decompositions as subwords of certain patterns. 

Since the number of patterns can be fairly large, we shall use various symmetries in order to group several patterns together.
For example, if we have an optimal decomposition
$$g = R(t_1 C_1) R(t_2 C_2) \ldots R(t_n C_n)$$
with $C_j \in \CC$, then
$$R(- t_1 C_1) R(- t_2 C_2) \ldots R(- t_n C_n)$$
is also an optimal decomposition (for a different element of $SO(3)$). This follows from the fact that
multiplication of controls by $-1$ is an automorphism of our problem. We denote this symmetry transformation on the set of patterns by $(X, Y) \mapsto (-X, -Y)$. 

Whereas the set of optimal patterns is always invariant with respect to the symmetry $(X, Y) \mapsto (-X, -Y)$,  
other types of symmetries that we shall consider are not universal and are present only for some patterns. 
If we make the following schematic representation of the controls, all symmetries that we consider
will be elements of the dihedral group of symmetries of a square:
\begin{figure}[h]
\includegraphics[clip=true,trim=30pt 30pt 30pt 30pt]{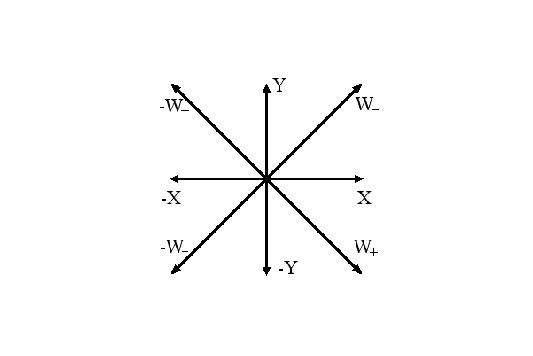} 
\caption{}
\end{figure}

Consider a transformation $X \mapsto X$, $Y \mapsto -Y$, $W_+ \mapsto W_-$,
$W_- \mapsto W_+$. 
Together with the symmetry $(X, Y) \mapsto (-X, -Y)$ this generates a set of $4$ transformations. We denote this set of symmetries by $(X, Y) \mapsto (\pm X, \pm Y)$. We assume that all symmetries we consider are compatible with multiplication by $-1$, even though they are not linear in general.

 We also consider a transformation $X \mapsto Y$, $Y \mapsto X$, $W_+ \mapsto - W_+$, $W_- \mapsto W_-$. Together with $(X, Y) \mapsto (-X, -Y)$, this generates a set of $4$ transformations, which we denote by $(X, Y) \mapsto \left\{ -X, -Y \right\}$.

 Finally, if we consider all of the above transformations together, we generate a full set of $8$ symmetries of the square in Fig.1, which we denote by $(X, Y) \mapsto \left\{ \pm X, \pm Y \right\}$. 

\begin{theorem}
\label{czero}
 Let $c = 0$, $0 < \kappa \leq 1$. For an element $g \in SO(3)$  
the infimum of the optimization Problem 1 is attained on a subword of one of the following patterns:

(I) $R(t_X X) R(t_Y Y) R(- t_X X) R(- t_Y Y)$ where $\tan ( t_X / 2) = \kappa \tan (t_Y /2)$, $0 < t_X, t_Y \leq \pi$, and symmetric to it under $(X, Y) \mapsto \left\{ \pm X, \pm Y \right\}$.

(II)  $R (\pi X) R(t W_+) R(\pi X)$ , with $t \geq 0$, and symmetric to it under $(X, Y) \mapsto \left\{ \pm X, \pm Y \right\}$.

(III)  $R (\pi X) R(t W_+) R(-\pi Y)$ , with $t \geq 0$, and symmetric to it under $(X, Y) \mapsto \left\{ \pm X, \pm Y \right\}$.
\end{theorem}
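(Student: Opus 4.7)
The plan is to apply the Pontryagin Maximum Principle (PMP) in its Lie-group formulation (as in \cite{J}) to the right-invariant control system on $SO(3)$, and then to classify the resulting extremal trajectories. Since $c = 0$, the basis $\{X, Y, Z\}$ is orthonormal; I identify $\mathfrak{so}(3)^* \cong \R^3$ so that the costate is $p = p_X X + p_Y Y + p_Z Z$ with dynamics $\dot p = p \times C$, and the Hamiltonian is $H(p, C) = \langle p, C \rangle - \cost(C)$. Restricted to the square $\CC$ with vertices $\pm X, \pm Y$, the function $H(p, \cdot)$ is piecewise linear, so a maximizing control is either a vertex (giving \emph{bang} arcs using $\pm X$ or $\pm Y$) or constant along an entire edge (giving a \emph{singular} arc); the latter occurs exactly when two adjacent vertex Hamiltonians coincide, e.g.\ $|p_X| - 1 = |p_Y| - \kappa$. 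Differentiating the switching function along the flow shows that on a singular arc one has $p_Z = 0$ and $\dot p = 0$, so the singular rotation axis is parallel to $p$. Checking the four edges of $\CC$ in the case $\mathcal{H} = 0$ singles out the four axes $\pm W_+$ and $\pm W_-$.

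For the bang-bang pattern~(I), I would exploit that on an $X$-arc the costate dynamics preserves $p_X$ and rotates $(p_Y, p_Z)$ rigidly with unit angular velocity, and analogously for $Y$-arcs. For the four-factor candidate $R(t_X X) R(t_Y Y) R(-t_X X) R(-t_Y Y)$, imposing the PMP switching conditions at each of the three internal switches and propagating the costate through the four arcs produces a small trigonometric system in $(t_X, t_Y)$. A half-angle substitution should collapse this system to the single identity $\tan(t_X/2) = \kappa \tan(t_Y/2)$, while the bound $t_X, t_Y \leq \pi$ follows because a bang arc longer than a half-period on the costate circle admits a strictly shorter extremal reparametrization.

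For patterns~(II) and~(III), the key observation is that on a singular arc with axis $W_+$ the costate is stationary, so the arc can be realized as a chattering limit of alternating small $X$ and $-Y$ arcs in the cost-optimal ratio $1 : \kappa$, as in (\ref{explim}). The bang arcs flanking the singular segment must bring the costate onto and off of the singular set $\{p_Z = 0, \ p_X + p_Y = 1 - \kappa\}$; a direct analysis of the costate circle traced during each flanking bang arc shows that the arc must have duration exactly $\pi$, a half-period. Enumerating the two admissible ways to place these $\pi$-arcs (along the same axis $X$, or along the axes $X$ and $-Y$) yields the two configurations $R(\pi X) R(t W_+) R(\pi X)$ and $R(\pi X) R(t W_+) R(-\pi Y)$, giving patterns~(II) and~(III).

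Finally, to rule out extremals with more than four bang segments or more than one singular arc, I would invoke the analysis of Section~4 of the paper, which shows that such longer decompositions admit cost-decreasing variations and therefore cannot be globally optimal. Closing the surviving list under the dihedral group $(X, Y) \mapsto \{\pm X, \pm Y\}$ then produces exactly the three families~(I)--(III). The main obstacle will be the singular-arc analysis in the third paragraph: one must carefully justify both the length-$\pi$ constraint on the bang arcs flanking a singular segment and rule out configurations with other singular axes or non-$\pi$ flanking durations, and the enumeration under the dihedral symmetry requires careful case-work.
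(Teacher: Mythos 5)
Your high-level strategy (PMP extremals on $SO(3)$, classify bang arcs and singular arcs, then bound the number of switches) matches the paper's. The first two paragraphs correctly reproduce the content of the paper's Section~3: the singular controls $\pm W_\pm$ come from the edges of $\CC$ with $z = p_Z = 0$ (Lemma~3.5), the $\tan(t_X/2) = \kappa\tan(t_Y/2)$ relation is exactly Proposition~3.7(a), and the costate motion on each bang arc is a harmonic oscillator (a circle in the $q$--$z$ or $s$--$z$ plane). Minor quibble: the singular locus for $W_+$ is the single point $\{p_X = 1, p_Y = -\kappa, p_Z = 0\}$, not the line $\{p_Z = 0,\ p_X+p_Y = 1-\kappa\}$ you write; PMP forces $\M(p) = 0$ as well, pinning both coordinates.

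The real gap is in the last paragraph. You write that to rule out more than four bang segments or more than one singular arc you ``would invoke the analysis of Section~4 of the paper'' --- but Section~4 \emph{is} the proof of the bound you need, so this is circular. The hard content of Theorem~\ref{czero} is precisely that content, and the necessary conditions of the PMP alone do not supply it: there are PMP extremals with arbitrarily many bang switches (e.g.\ the closed orbit $\mybox$ $1$ $\!\to\!$ $8$ $\!\to\!$ $14$ $\!\to\!$ $11$ $\!\to\!$ $1$ $\!\to\!\cdots$), and your argument gives no reason they are excluded. The paper kills these by two non-PMP devices you do not reproduce: the ``flip'' identity of Proposition~\ref{flip} (if $R(t_1 X)R(t_2 Y)$ is a rotation in angle $\pi$ then $R(t_1 X)R(t_2 Y) = R(-t_2 Y)R(-t_1 X)$), and a second-order Campbell--Hausdorff expansion (Proposition~\ref{epsfive}) showing that the five-factor extremal $R(-\delta X)R(t_Y Y)R(t_X X)R(-t_Y Y)R(-\delta X)$ strictly loses to a nearby four-factor word. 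For the $c = 0$ bound on the factors flanking a singular $W_+$ arc, your claim that each flanking bang has length ``exactly $\pi$'' is stated without justification and is, as stated, not what is needed: what the paper actually proves (inside Proposition~\ref{singleW}) is that the flanking bang has length $\le\pi$ and terminates the trajectory, by observing that a bang of length $\pi$ satisfies $R(\pi X) = R(-\pi X)$, and a subword $R(tW_+)R(-\varepsilon X)$ of the flipped decomposition violates the PMP admissibility of Lemma~\ref{reg} --- another application of the flip trick, absent from your sketch. Without these arguments the proposal establishes the list of \emph{candidate} extremal arcs but not the finiteness of the optimal patterns, which is the actual assertion of the theorem.
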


\

 When we apply symmetry transformations, e.g. $X \mapsto Y$, $Y \mapsto X$, we change the parameters $t_X$, $t_Y$ accordingly, but the relation 
$\tan ( t_X / 2) = \kappa \tan (t_Y /2)$ in (I) is preserved. Under this symmetry transformation, the pattern (I) takes form
$R(t_Y Y) R(t_X X) R(- t_Y Y) R(- t_X X)$ with $\tan ( t_X / 2) = \kappa \tan (t_Y /2)$.

Set
\begin{equation}
\label{tim}
\htx = \arccos \left(  \frac{c - \kappa} {c + \kappa} \right), \quad
\hty = \arccos \left( - \frac{1 - \kappa c}{1 + \kappa c} \right), \quad
0 \leq \htx, \hty \leq \pi.
\end{equation}

\begin{theorem}
\label{Mgt}
 Let $0 < c < \kappa \leq 1$. 
For an element $g \in SO(3)$ the infimum of 
the optimization Problem 1 is attained on a subword of either pattern (I) or one of the following:


(IV)  $R (\hty Y) R( \htx X) R( t W_+ ) R(\htx X) R(\hty Y)$, with $t \geq 0$, and symmetric to it under $(X, Y) \mapsto \left\{ -X, -Y \right\}$.

(V) $R (\htx Y) R( \htx X) R( t W_+ ) R(-\hty Y) R(-\htx X)$, with $t \geq 0$, and symmetric to it under $(X, Y) \mapsto \left\{ -X, -Y \right\}$.

(VI) $R (\pi Y) R(t W_-) R(\pi Y)$ , with $t \geq 0$, and symmetric to it under $(X, Y) \mapsto \left\{ -X, -Y \right\}$.

(VII)  $R (\pi X) R(t W_-) R(\pi Y)$ , with $t \geq 0$, and symmetric to it under $(X, Y) \mapsto \left\{ -X, -Y \right\}$.
\end{theorem}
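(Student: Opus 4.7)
The plan is to follow the blueprint laid out in Sections 3 and 4: apply Pontryagin's Maximum Principle (PMP) to enumerate all extremals satisfying the necessary conditions of optimality, and then rule out extremals with too many factors by a comparison argument. Since the cost function $\cost(aX+bY) = |a| + \kappa|b|$ is piecewise linear on the convex polygon $\CC$, the PMP maximization at each time $t$ reduces, for a costate $\lambda(t)$ viewed as a vector in $\R^3$, to maximizing a concave piecewise-linear function of $(a,b)$. Its optimum is attained either at a vertex $\pm X, \pm Y$ of $\CC$ (bang control) or along an entire edge (singular control); the latter forces two switching functions to vanish simultaneously, and a direct computation using the inner product relations $(W_+, \kappa X+Y) = 0$ and $(W_-, \kappa X -Y) = 0$ shows that the admissible singular directions are exactly multiples of $\pm W_+$ and $\pm W_-$.

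Next I would analyze the corner conditions at each switching instant. Under the left-invariant Hamiltonian flow the costate evolves by the adjoint action of the accumulated rotation, and demanding continuity of $\lambda$ together with simultaneous maximization by the two adjacent controls yields trigonometric identities linking the rotation angles. For $\kappa > c$, these identities split into two families. For a pure bang-bang extremal that alternates between $\pm X$ and $\pm Y$, the corner condition is exactly $\tan(t_X/2) = \kappa \tan(t_Y/2)$, producing pattern (I). For a bang-singular-bang extremal with a central $W_+$ arc, transversality of the costate forces the flanking rotation angles to take the specific values $\htx$ and $\hty$ in (\ref{tim}), which reassembled give patterns (IV) and (V). The degenerate patterns (VI) and (VII), where the singular piece is $W_-$, arise when the corner condition can only be satisfied at the antipodal point of the adjoint orbit, forcing the neighbouring rotations to have angle exactly $\pi$.

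The principal obstacle is showing that any extremal satisfying the PMP which contains more factors than the listed patterns cannot in fact be optimal. The strategy is twofold: first, that the endpoint map restricted to alternating bang-bang trajectories becomes degenerate beyond four pieces, so a pair of adjacent pieces can be collapsed without changing $g \in SO(3)$; and second, that whenever a chain of bang pieces of maximal length $\htx$ and $\hty$ appears, one can explicitly exhibit a strictly cheaper bang-singular-bang shortcut through $W_+$, provided $\kappa > c$. This strict inequality is precisely the sign that makes the singular cost undercut the competing bang-bang cost, which is why the optimal list bifurcates at $\kappa = c$. Once the length bound is in hand, completeness --- that every $g \in SO(3)$ is realized as a subword of one of the five pattern families --- follows by a dimension count: each family carries at most three free parameters, and their images cover an open dense subset of $SO(3)$ whose closure is the whole group.
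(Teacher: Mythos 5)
Your outline correctly identifies the PMP skeleton — bang controls at the vertices $\pm X, \pm Y$ of $\CC$, singular arcs along the edges with directions $\pm W_\pm$, and the corner conditions that pin down $\tan(t_X/2)=\kappa\tan(t_Y/2)$ and the flanking angles $\htx,\hty$. That part matches the paper's Lemmas \ref{reg}, \ref{critreg} and Proposition \ref{times}. But the two arguments you propose for the hard part — bounding the number of factors — do not hold up, and they are not the ones the paper uses.

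First, you claim that ``the endpoint map restricted to alternating bang-bang trajectories becomes degenerate beyond four pieces, so a pair of adjacent pieces can be collapsed without changing $g$.'' This is false: there are genuine five-factor (and longer) bang-bang extremals satisfying the PMP with nonzero Jacobian, and you cannot delete a pair of arcs without changing the group element. The paper instead attacks these by a \emph{second-order} argument (Proposition \ref{epsfive}): it takes the extremal
$R(-\delta X)\,R(t_Y Y)\,R(t_X X)\,R(-t_Y Y)\,R(-\delta X)$, pulls back to $SU(2)$, expands via Campbell--Hausdorff to order $\varepsilon^2$, and exhibits a four-factor competitor whose cost is strictly smaller at second order. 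That computation (together with the degenerate case $\kappa=1, c=0$, handled by a flip identity) is what caps pattern (I) at four factors. Nothing in your proposal supplies this — the PMP, a first-order test, is silent about it, and the endpoint-map remark does not substitute.

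Second, your closing ``completeness by dimension count'' — that three-parameter families cover a dense open subset whose closure is all of $SO(3)$ — is not a valid argument and is not what the paper does. The paper gets completeness for free from Proposition \ref{existence}: an optimal solution \emph{exists} for every $g$ (after convexifying the control set, using a compactness theorem, and reparametrizing back), it must satisfy the PMP, and it must avoid the non-optimal configurations ruled out in Section 4; what survives is exactly the listed patterns. You also elide the arguments of Propositions \ref{Wconj}, \ref{singleW}, \ref{fiveW} that merge multiple $W_+$ arcs into one (via the fact that $R(\htx X)R(\hty Y)$ is a rotation by $\pi$ about an axis orthogonal to $W_+$, hence conjugates $R(tW_+)$ to $R(-tW_+)$) and bound the number of bang pieces on either side of the singular arc using Proposition \ref{flip}; without these the list (IV)--(V) is not established. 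Finally, your explanation of (VI)--(VII) as ``the corner condition satisfied only at the antipodal point'' is off: the paper's reason is that for $\kappa>c$ the $X$- and $Y$-evolution circles through the $W_-$ critical point lie entirely inside the admissible strip, so returning to a switch requires a full period $2\pi$, while any evolution time exceeding $\pi$ is already excluded — leaving only the $\pi$-arcs flanking a single $W_-$ piece.
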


\


\begin{theorem}
\label{Mlt}
 Let $0 < \kappa \leq c < 1$.
For an element $g \in SO(3)$ the infimum of 
the optimization Problem 1 is attained on a subword of either
patterns (I), (IV), (V) given above, or the following pattern

(VIII)  $R (\pi Y) R(t X) R(\pi Y)$ , with $0 \leq t \leq 2 \htx$, and symmetric to it under $(X, Y) \mapsto \left( -X, -Y \right)$.
\end{theorem}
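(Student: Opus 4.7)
The plan is to apply the Pontryagin Maximum Principle (PMP) on $SO(3)$ as developed in Section 3 and then invoke the length reductions of Section 4, specialising the resulting classification to the regime $0 < \kappa \leq c < 1$. With the piecewise-linear cost (\ref{costf}), at each instant the optimal control maximises the pairing of the adjoint covector with $\xi \in \CC$ minus $\cost(\xi)$; since this functional is piecewise linear on the square $\CC$, its maximum is attained either at a vertex $\{\pm X, \pm Y\}$ or on an edge whose outward normal is $\kappa X \pm Y$, i.e.\ at one of the singular directions $\{\pm W_+, \pm W_-\}$. Between switches the adjoint evolves by $\operatorname{Ad}^*$-conjugation by the current rotation, and the switching times are determined by when the maximiser changes.

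Specialising to $\kappa \leq c$, patterns (I), (IV), (V) arise from switching sequences that do not involve a $W_-$-singular arc and are therefore inherited unchanged from Theorem \ref{Mgt}. The essential change is that $W_- = (1-\kappa c) X + (\kappa - c) Y$ ceases to be a new singular direction: at $\kappa = c$ it is proportional to $X$, so pattern (VI) $R(\pi Y) R(t W_-) R(\pi Y)$ collapses into pattern (VIII) $R(\pi Y) R(t X) R(\pi Y)$; and for $\kappa < c$ the adjoint alignment required to sustain a $W_-$-singular arc fails, leaving only the $X$-singular limit (VIII). Pattern (VII) similarly degenerates into a subword of (I), (IV) or (V), so the list loses two patterns and gains one relative to the $\kappa > c$ regime.

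The bound $t \leq 2\htx$ in pattern (VIII) would be derived from the adjoint dynamics along the middle $X$-arc: the adjoint is $\operatorname{Ad}^*$-rotated by $R(tX)$, and the edge-maximisation condition persists only while the adjoint stays in a half-plane whose aperture is $2\htx$, which follows by a direct trigonometric calculation using $\cos \htx = (c-\kappa)/(c+\kappa)$; for $t > 2\htx$ the extremal condition fails and any such endpoint admits a cheaper decomposition via (I), (IV) or (V), verified by direct cost comparison. The Section 4 reductions then eliminate any optimal decomposition with more factors than the listed patterns permit. The main obstacle will be the bifurcation analysis at $\kappa = c$: one must verify that the $W_-$-singular arc ceases to be extremal as $\kappa$ crosses $c$, that the putative $X$-singular replacement satisfies PMP on the full interval $[0, 2\htx]$, and that no new singular directions emerge for $\kappa < c$. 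This is delicate because several inequalities in the Hamiltonian maximisation flip sign at $\kappa = c$, and the adjoint dynamics must be tracked carefully through this degeneration.
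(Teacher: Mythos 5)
Your high-level plan (PMP, phase-portrait analysis in the coordinates $(s,q,z)$, then switching-count reductions) is the right one, and you correctly locate the source of the bifurcation: for $\kappa < c$ the $W_-$-singular arc is no longer admissible, so patterns (VI), (VII) drop out. But your derivation of the bound $t \leq 2\htx$ is not quite the paper's and as stated does not work. You invoke an "edge-maximisation condition" (but $\pm X$ are vertices of $\CC$, not edges) and assert that for $t > 2\htx$ "any such endpoint admits a cheaper decomposition via (I), (IV) or (V), verified by direct cost comparison." No cost comparison is needed or performed: under $X$-control the adjoint moves along circles centred at $(q,z) = (c,0)$ inside the strip $s=1$, $-\kappa \leq q \leq \kappa$, and when $\kappa \leq c$ the maximal arc of such a circle inside the strip subtends angle exactly $2\arccos\bigl((c-\kappa)/(c+\kappa)\bigr) = 2\htx < \pi$; this single PMP constraint gives the bound. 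The phrase "half-plane whose aperture is $2\htx$" is also off --- the constraint is a strip, and the arc length varies with the circle's radius; $2\htx$ is its maximum.

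More seriously, the proposal does not engage with the part of the argument that carries most of the weight: showing that PMP-compatible trajectories with many switches are nonetheless suboptimal. The paper does this in Section 4 via passage to $SU(2)$, the reflection identity of Proposition \ref{flip}, the non-optimality of certain $3$- and $5$-factor words (Propositions \ref{epsthree} and \ref{epsfive}, the latter a second-order Campbell--Hausdorff computation), and the reduction to a single $W_{\pm}$-arc with at most two flanking factors (Propositions \ref{singleW}, \ref{fiveW}). For pattern (VIII) specifically, the switching bound comes from the observation that the full $Y$-arc between switching points would take time exceeding $\pi$, so interior $Y$-factors cannot occur, leaving at most a $Y$--$X$--$Y$ word; you do not state this. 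Your closing paragraph acknowledges that "the adjoint dynamics must be tracked carefully" but leaves the entire switching-count analysis as a promissory note, and it is precisely there that the proof lives. Minor point: pattern (VII) does not "degenerate into a subword of (I), (IV) or (V)"; for $\kappa < c$ the control $W_-$ simply fails to be in the admissible convex hull at the corner $(s,q) = (1,\kappa)$ (Lemma \ref{critreg}(b)), so (VI) and (VII) never arise.
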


\

\begin{theorem}
\label{Meq}
 Let $\kappa = 0$, $c \geq 0$. For an element $g \in SO(3)$  
the infimum of the optimization Problem 1 is attained on a subword of one of the following two patterns

(IX)  $R (\pi Y) R(t W_+) R(\pi Y)$ , with $t \geq 0$, and symmetric to it under $(X, Y) \mapsto ( \pm X, \pm Y )$.

(X)  $R (\pi Y) R(t W_+) R(-\pi Y)$ , with $t \geq 0$, and symmetric to it under $(X, Y) \mapsto ( \pm X, \pm Y )$.
\end{theorem}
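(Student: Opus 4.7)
The plan is to specialize the PMP analysis of Section~3 and the factor-count bounds of Section~4 to the degenerate regime $\kappa = 0$, where $Y$-rotations are free.

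First, I would record the simplifications at $\kappa = 0$: the auxiliary axes coincide, $W_+ = W_- = X - cY$, and this common vector is orthogonal to $Y$; the cost of $R(tW_+)$ equals $t$; and because $R(\pi Y)$ acts as $-I$ on $Y^\perp$, we have $R(\pi Y)R(tW_+)R(\pi Y) = R(-tW_+)$ and $R(-\pi Y) = R(\pi Y)$. Consequently, subwords of patterns (IX) and (X), together with their $(X,Y)\mapsto(\pm X,\pm Y)$-symmetric variants, cover exactly the family $\{R(s_1 Y)R(tW_+)R(s_2 Y) : s_1, s_2 \in [-\pi,\pi],\ t \geq 0\}$, each with cost $t$. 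Since $W_+$ and $Y$ are orthogonal, Euler's theorem applied to the orthogonal pair $(Y, W_+)$ gives such a decomposition for every $g \in SO(3)$, so attainability is immediate; the remaining content of the theorem is optimality.

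For optimality, I would set up the Hamiltonian $H(u) = -\lambda_0\cost(u) + \langle\Lambda(t), u\rangle$ on the control set $\CC$ and let $\xi(t) = \langle\Lambda(t), X\rangle$, $\eta(t) = \langle\Lambda(t), Y\rangle$. At $\kappa = 0$ the maximum over the extreme points $\pm X, \pm Y$ is $\max(|\eta|,\, |\xi|-\lambda_0)$; on the four edges of $\CC$ where one of $\pm\xi\pm\eta -\lambda_0$ vanishes identically one gets a singular arc, pinned by the higher-order PMP conditions of Section~3 to the direction $\pm W_+$. Since $Y$-arcs are free, their lengths are unconstrained, so every optimal extremal is an alternation of $Y$-arcs of arbitrary length with $W_+$-arcs.

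The central technical step is to reduce this alternation to a single $W_+$-arc. Using the conjugation identity $R(sY)R(tW_+)R(-sY) = R(tW_+^{(s)})$, with $W_+^{(s)} = R(sY)W_+R(-sY) \in Y^\perp$, the composite $R(t_1 W_+)R(sY)R(t_2 W_+)$ can be rewritten as a product of two rotations about axes in $Y^\perp$ followed by a free $Y$-rotation; re-Euler-decomposing this $Y^\perp$-product with the orthogonal pair $(Y, W_+)$ yields a single $W_+$-arc of cost $\le t_1+t_2$. Iterating absorbs every finite alternation into a single $W_+$-arc, giving the claimed form $g = R(s_1 Y)R(tW_+)R(s_2 Y)$ with $|s_1|, |s_2| \le \pi$ and $t\ge 0$---a subword of (IX) if $s_1, s_2$ have the same sign and of (X) otherwise. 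The hardest part is the cost-monotonicity inequality in the reduction: rotations about distinct axes in $Y^\perp$ do not combine additively, so a naive triangle inequality does not suffice. I expect the proof to settle this by combining the explicit $SO(2)$-evolution of $(\xi, \eta)$ along $Y$-arcs with the switching-time identities developed in Section~4, rather than by any purely geometric bound.
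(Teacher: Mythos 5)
Your setup is on the right track --- the simplifications at $\kappa=0$ ($W_+ = W_- = X - cY \perp Y$, the cost of $R(tW_+)$ is $|t|$, and the identity $R(\pi Y)R(tW_+)R(\pi Y) = R(-tW_+)$, which is the paper's Proposition~\ref{Wconj}(c)) and the identification of admissible controls $\pm Y$, $\pm W_+$ from the Hamiltonian all match Section~3. But the proposal contains a genuine gap precisely at the step you flag as hardest, and the reason is an earlier misstatement of what the PMP tells you.

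You assert that because $Y$-rotations are free, ``their lengths are unconstrained, so every optimal extremal is an alternation of $Y$-arcs of arbitrary length with $W_+$-arcs.'' This is false. The PMP constrains the adjoint trajectory $p(t)$, not the cost, and at $\kappa=0$ the admissible set is the closed disc $q=0$, $|s|\le 1$ in the $(s,z)$-plane. The $Y$-evolution of $p$ is a rotation of this plane (the harmonic oscillator from Section~3), and the singular $W_+$-arcs live only at the two antipodal boundary points $(s,z)=(\pm 1,0)$. Consequently a $Y$-arc joining two consecutive $W_+$-arcs must traverse a half-circle: its phase time is \emph{exactly} $\pi$, not arbitrary. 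Only the first and last $Y$-arcs may be shorter. This pinning is the whole point of the $\kappa=0$ analysis.

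Once the interior $Y$-arcs are pinned at $\pi$, the reduction you struggle with becomes trivial and is exactly how the paper argues in Proposition~\ref{singleW}: since $W_+^{(\pi)} = R(\pi Y)W_+ = -W_+$, all the $W_+$-arcs lie along the single axis $\pm W_+$ after commuting $R(\pi Y)$ past them via Proposition~\ref{Wconj}(c), and collinear rotations combine \emph{additively}. No spherical-triangle cost inequality for rotations about distinct axes in $Y^\perp$ is needed, because the axes $W_+^{(s)}$ with $s\ne\pi$ never appear in an optimal extremal. Your proposed route --- iterating $R(t_1 W_+)R(sY)R(t_2 W_+)$ with arbitrary $s$ and re-Euler-decomposing, then proving $|b|\le t_1+t_2$ for the resulting $W_+$-angle $b$ --- is attempting a harder nonlocal estimate that the paper never needs, and you explicitly leave its key inequality unproved. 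Replace the ``arbitrary length'' claim with the $\pi$-pinning from the adjoint phase portrait and the proof closes cleanly.
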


\begin{remark}
When $\kappa = c > 0$ we have $W_-$ proportional to $X$, and the lists of patterns in Theorems \ref{Mgt} and \ref{Mlt} 
become equivalent.
\end{remark}


\section{Geometric optimization theory}

In this section we will review the geometric optimization theory following \cite{J}, and apply it to our optimization problem.

 The Lie algebra $so(3)$ of the Lie group $SO(3)$ is the tangent space to $SO(3)$ at identity and consists of skew-symmetric $3\times 3$ matrices. The Lie bracket of two matrices in $so(3)$ is
$[A,B] = AB - BA$. We may identify the space $so(3)$ with $\R^3$ via the map (\ref{cross}) $X \mapsto \ad (X)$. The corresponding Lie bracket of two vectors in $\R^3$ is the cross product.


 Fix $g \in SO(3)$. A curve leading to $g$ is an absolutely continuous function $U : [0, t_0] \rightarrow SO(3)$ such that $U(0) = I$ and $U(t_0) = g$. An absolutely continuous function has a measurable derivative $u: \ [0, t_0] \rightarrow so(3)$ such that $U^\prime (t) = U(t) u(t)$ for almost all $t$. The derivative $u$
is Lebesgue integrable \cite{S}.

 Let us formulate a differential version of our optimization problem.

{\bf Problem 2.} For an element $g \in SO(3)$ find the infimum of $\int_0^{t_0} \cost(U^{-1}(t) U^\prime(t)) dt$
\break
 over all absolutely continuous curves  
$U : [0, t_0] \rightarrow SO(3)$ leading to $g$, satisfying 
\break
$U^{-1}(t) U^\prime(t) \in \CC \subset \R^3 = so(3)$ for almost all $t$.

The parameter $t_0$ in Problem 2 is not fixed and when taking the infimum we consider the curves with all $t_0 \geq 0$.

 It is clear that the restriction to the case of piecewise constant controls $u(t) = U^{-1}(t) U^\prime(t)$ gives precisely Problem 1. On the other hand we shall see that the solutions of Problem 2 indeed have piecewise constant controls, which implies equivalence of Problems 1 and 2.  

\begin{proposition}
\label{existence}
For any $g \in SO(3)$ there exists an absolutely continuous optimal solution $U$ for Problem 2.
\end{proposition}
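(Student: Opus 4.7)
The plan is to extract a convergent subsequence from a minimizing sequence $U_n$ by weak-$*$/uniform compactness, pass to the limit in the controlled ODE, and then convert the resulting relaxed minimizer (whose controls a priori take values only in the convex hull $\widetilde{\CC} := \{aX+bY : |a|+|b|\leq 1\}$ of $\CC$) into a genuine admissible solution of Problem 2 by a time reparametrization exploiting the positive $1$-homogeneity of $\cost$.

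First, I would fix a minimizing sequence: absolutely continuous $U_n : [0, t_n] \to SO(3)$ with $U_n(0)=I$, $U_n(t_n)=g$, controls $u_n(t) = U_n^{-1}(t) U_n'(t) \in \CC$ a.e., and costs $c_n \to c^* := \inf$, with $c_n \leq c^*+1$ throughout. The horizons $t_n$ need to be bounded. When $\kappa > 0$ this is immediate since $\cost(u) \geq \kappa$ on $\CC$, giving $t_n \leq (c^*+1)/\kappa$. In the degenerate case $\kappa=0$ the $Y$-rotations are free, so I would instead work with a minimizing sequence of discrete decompositions (\ref{dec}) with $C_j \in \{\pm X,\pm Y\}$, which has the same infimum by Theorem~1.4 of \cite{B}; after reducing every pure $Y$-step modulo $2\pi$ and merging consecutive like segments, the bounded $X$-budget also bounds the total time.

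Next, reparametrize all curves to a common interval $[0,T]$ (padding by the trivial motion if necessary). The controls $u_n$ are uniformly bounded in $L^\infty([0,T],\R^3)$, so Banach--Alaoglu yields a weak-$*$ subsequential limit $u \in L^\infty$. The curves $U_n$ are uniformly Lipschitz (since $|u_n|$ is bounded on $\CC$) into the compact manifold $SO(3)$, so by Arzel\`a--Ascoli a further subsequence converges uniformly to some $U$. Passing to the limit in the integrated ODE $U_n(t) = I + \int_0^t U_n(s) u_n(s)\, ds$, using uniform convergence of $U_n$ against the weak-$*$ convergence of $u_n$, gives $U(t) = I + \int_0^t U(s) u(s)\, ds$, whence $U$ is absolutely continuous with $U' = Uu$ a.e., $U(0)=I$, and $U(T)=g$. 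A standard Lebesgue-differentiation argument against the closed convex constraint forces $u(t) \in \widetilde{\CC}$ for a.e.~$t$. Since $\cost(aX+bY) = |a|+\kappa|b|$ is convex in $(a,b)$, the functional $J(u) := \int_0^T \cost(u(t))\,dt$ is weak-$*$ lower semicontinuous, so $J(u) \leq \liminf c_n = c^*$.

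The main obstacle and final step is that $u(t)$ need only lie in $\widetilde{\CC}$, whereas Problem~2 demands controls in $\CC$ proper. I would resolve this using the positive $1$-homogeneity of $\cost$. Writing $u(t) = a(t)X + b(t)Y$, set $\lambda(t) = |a(t)|+|b(t)| \in [0,1]$. Excise the intervals on which $\lambda=0$ (there $U$ is constant, so removing them changes neither endpoints nor cost), and on the complement reparametrize by $s(t) = \int_0^t \lambda(\tau)\, d\tau$. The new controls $\tilde u(s) = u(t(s))/\lambda(t(s))$ satisfy $|\tilde a|+|\tilde b|=1$, hence lie in $\CC$, and the change of variables gives $\int \cost(\tilde u)\, ds = \int \cost(u)\, dt \leq c^*$. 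The reparametrized curve is therefore admissible for Problem~2 and attains the infimum, which by definition of $c^*$ must in fact equal $c^*$.
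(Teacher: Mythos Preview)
Your overall strategy matches the paper's: pass to the convex hull $\overline{\CC}$, obtain a relaxed minimizer by compactness and weak lower semicontinuity, then reparametrize back to $\CC$ using the positive $1$-homogeneity of $\cost$. The paper outsources the middle step to Theorem~4.10 of \cite{S} (an existence theorem for differential inclusions with convex right-hand side), while you carry out the Banach--Alaoglu and Arzel\`a--Ascoli argument by hand; the final arc-length-type reparametrization is the same in both. For $\kappa>0$ your horizon bound $t_n\le (c^*+1)/\kappa$ is valid and the argument closes.

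The gap is in the $\kappa=0$ case. The assertion that after reducing $Y$-steps modulo $2\pi$ and merging, ``the bounded $X$-budget also bounds the total time'' is false: those operations bound the length of each $Y$-segment but not their \emph{number}. Concretely (take $c=0$ so that $R(\pi Y)X R(\pi Y)^{-1}=-X$), the block $\bigl(R(N^{-1}X)\,R(\pi Y)\bigr)^{2k}$ equals the identity in $SO(3)$, has cost $2k/N$, and already consists of $4k$ alternating, non-mergeable segments with each $Y$-step equal to $\pi<2\pi$; appending such a block to any near-optimal word for $g$ and letting $k\to\infty$ with $k/N\to 0$ yields a minimizing sequence whose total time is at least $2\pi k\to\infty$. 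Your step ``reparametrize all curves to a common interval $[0,T]$ (padding by the trivial motion)'' therefore cannot be carried out, and the subsequent compactness argument has no fixed domain to live on. The paper sidesteps this by citing a free-horizon existence theorem for the convexified problem rather than first trying to bound the horizons; within your framework the natural repair is to do the same, i.e.\ apply your weak-$*$/Arzel\`a--Ascoli machinery directly to the relaxed free-time problem rather than to Problem~2 itself.
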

\begin{proof}
The proof of this Proposition is based on the observation that the cost assigned to a curve $U: [0, t_0] \rightarrow SO(3)$ is independent of the choice of its
parametrization. To prove this, we first note the cost function (\ref{costf}) satisfies $\cost (\lambda u) = \lambda \cost (u)$ for $\lambda \geq 0$. Consider
an absolutely continuous increasing surjective reparametrization $f: [0, \tau_0] \rightarrow [0, t_0]$ and the corresponding reparametrized curve $\bU (\tau) = U (f (\tau))$. 
Then $\bU$ and $U$ have the same cost:
\begin{align*}
& \int_{0}^{\tau_0} \cost \left( \bU (\tau)^{-1} \frac{d}{d\tau} \bU (\tau) \right) d \tau \\
& =  \int_{0}^{\tau_0} \cost \left( U (t)^{-1} \frac{d}{dt} U (t) \big|_{t = f(\tau)} f^\prime (\tau) \right) d \tau \\
& =  \int_{0}^{\tau_0} \cost \left( U (t)^{-1} \frac{d}{dt} U (t) \big|_{t = f(\tau)} \right)  f^\prime (\tau) d \tau \\
& = \int_0^{t_0} \cost \left( U (t)^{-1} U^\prime (t)  \right)  d t.
\end{align*}
This computation shows that rescaling of the set of controls $\CC$ does not change the cost of a curve $U$ leading to $g$ with $U^{-1}(t) U^\prime (t) \in \CC$.

Let us modify Problem 2 by replacing the set $\CC$ with its convex hull
\begin{equation*}
\overline{\CC} = \left\{ a X + b Y \; \big| \; |a| + |b| \leq 1 \right\} . 
\end{equation*}
Once the control set is convex, we can apply Theorem 4.10 from \cite{S} to obtain the existence of an absolutely continuous optimal
solution $U: [0, t_0] \rightarrow SO(3)$ for the modified problem. To go back to the setting of Problem 2, we note that every absolutely
continuous curve admits a parametrization by the arc length, i.e., the natural parametrization (see for example Section 5.3 in \cite{P}). 
Then it is easy to see that the curve $U$ may also be reparametrized with $U^{-1}(t) U^\prime (t) \in \CC$. Since reparametrization does not
change the cost, we see that an optimal solution of the modified problem with the set of controls $\overline{\CC}$ yields an optimal solution for
Problem 2.
\end{proof}

\begin{remark}
Our optimization problem induces a left-invariant metric on $SO(3)$. It is possible to see that this metric
does not correspond to any Riemannian structure on this Lie group. 
\end{remark}

 The Hamiltonian function for Problem 2 is 
\begin{equation*}
\Ham (p, u) = p_0 \cost (u) + (p, u),
\quad u \in \CC, p \in \R^3 ,
\end{equation*}
which involves a parameter $p_0 \leq 0$ (see Section 11.2.2 in \cite{J} for details).

 For each $p \in \R^3$ we define the maximal Hamiltonian
\begin{equation*}
\M (p) = \max_{u \in \CC} \Ham (p, u).
\end{equation*}

\begin{theorem}
\label{PMP}
(Pontryagin's Maximum Principle, \cite{J})
Let $U$ be an optimal curve leading to $g \in SO(3)$ for Problem 2. Then 
there exists an absolutely continuous function $p: [0, t_0] \rightarrow \R^3 = so(3)$ and a constant $p_0 \leq 0$ such that  for almost all $t \in [0, t_0]$
the following equations hold:

\begin{equation}\LeftEqNo
\tag{i} 
\Ham (p(t), u(t)) = \M (p(t)) = 0
\end{equation}
and 
\begin{equation}\LeftEqNo
\tag{ii} 
\frac{dp}{dt} = p(t) \times u(t).
\end{equation}

If $p_0 = 0$ then $p(t)$ is non-zero for almost all $t \in [0, t_0]$.

\end{theorem}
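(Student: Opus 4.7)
I would follow the standard geometric derivation of the Pontryagin Maximum Principle, specialized to the right-invariant control system on $SO(3)$. First, augment the state by a running-cost coordinate $\xi(t)=\int_0^t\cost(u(s))\,ds$, so that Problem 2 becomes a terminal-cost problem on $SO(3)\times\R$: minimize $\xi(t_0)$ over trajectories of the augmented system that reach the fiber $\{g\}\times\R$. Optimality of $(U,\xi)$ is then equivalent to the assertion that the endpoint $(g,\xi(t_0))$ lies on the lower boundary of the reachable set from $(I,0)$ in the enlarged state space.

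The main step is to construct the Pontryagin approximating cone $K$ at this boundary point by means of needle variations. For a Lebesgue point $\tau\in(0,t_0)$ of $u$ and a control value $v\in\CC$, replacing $u$ by $v$ on the window $[\tau-\epsilon,\tau]$ deforms the endpoint in $SO(3)\times\R$, to first order in $\epsilon$, by the left-translation of $(v-u(\tau),\cost(v)-\cost(u(\tau)))$ along the nominal flow from $\tau$ to $t_0$. Closing the set of such first-order deviations under nonnegative linear combinations across finitely many needle windows produces a convex cone $K\subset T_gSO(3)\oplus\R$. The technical heart of the argument, and the main obstacle, is the multi-needle / open-mapping lemma of Chapter 11 of \cite{J}: if $(0,-1)$ lay in the interior of $K$, one could build a genuine admissible perturbation hitting $g$ with strictly smaller $\xi(t_0)$, contradicting optimality.

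Because $(0,-1)\notin\mathrm{int}(K)$, a supporting hyperplane separates them, producing a nonzero functional $(p(t_0),p_0)$ with $p_0\leq 0$. Evaluating the separation inequality on the first-order needle deviations gives, for almost every $t$ and every $v\in\CC$,
\begin{equation*}
p_0\cost(v)+(p(t),v)\leq p_0\cost(u(t))+(p(t),u(t)),
\end{equation*}
which is precisely the Hamiltonian maximization $\Ham(p(t),u(t))=\M(p(t))$. Here $p(t)$ is obtained by pulling $p(t_0)$ back along the nominal flow; under the identification $so(3)^*\cong\R^3$ in which the coadjoint representation is realized by the cross product, differentiating the transport law in $t$ yields the adjoint equation $dp/dt=p(t)\times u(t)$.

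The vanishing $\M(p(t))=0$ follows from the scale-invariance already exploited in Proposition \ref{existence}: both $\cost$ and the velocity are homogeneous of degree one in $u$, so time reparametrizations leave the cost unchanged and the terminal time $t_0$ is free, forcing the maximized Hamiltonian to be zero at the optimum; the adjoint equation then gives constancy in $t$. Finally, if $p_0=0$ and $p(t_0)\neq 0$, then $p(t)$ inherits nonzeroness from $p(t_0)$ through the invertible coadjoint transport, so in fact $p(t)\neq 0$ for every $t$, which is stronger than what is stated. I would invoke the multi-needle construction of \cite{J} directly rather than reproduce it here.
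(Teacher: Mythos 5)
The paper does not prove this theorem at all: it is stated as Pontryagin's Maximum Principle with a citation to Jurdjevic's book \cite{J}, and the text moves directly to consequences (the remark that $p_0\neq 0$, Lemma \ref{cons}, etc.). So there is no in-paper proof to compare against; your task here is really to verify that the standard derivation specializes correctly to the left-invariant system $U'=Uu$ on $SO(3)$.

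Your sketch is a faithful outline of that standard argument (cost augmentation, needle variations at Lebesgue points, the approximating cone and open-mapping lemma, separating hyperplane, coadjoint transport of the covector), and the identification $so(3)^*\cong\R^3$ making $\mathrm{ad}^*_u p = p\times u$ correctly produces the adjoint equation (ii). Two small points worth tightening. First, the system $U'(t)=U(t)u(t)$ is \emph{left}-invariant in the usual convention (it commutes with left translations $U\mapsto hU$), not right-invariant as you write; this is only terminology, but it matters when you look up which version of the adjoint transport to use in \cite{J}. Second, your derivation of $\M(p(t))=0$ slightly conflates two facts: the constancy of the maximized Hamiltonian along extremals is a structural consequence of the Hamiltonian system (maximization condition plus adjoint flow), while its vanishing comes from the free-terminal-time transversality condition, which in this problem you can indeed read off from the degree-one homogeneity of $\cost$ and the reparametrization invariance used in Proposition \ref{existence}. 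Stating it as ``homogeneity forces the maximized Hamiltonian to be zero at the optimum, and it is constant along the extremal, hence zero for a.e. $t$'' is cleaner than ``the adjoint equation then gives constancy.'' Your final observation that $p_0=0$ with nontriviality forces $p(t)\neq 0$ for all $t$ (via invertibility of coadjoint transport) is correct and indeed slightly stronger than the almost-everywhere statement in the theorem.
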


\begin{lemma}
\label{cons}
The quantity $|p(t)|$ is conserved.
\end{lemma}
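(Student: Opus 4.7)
The plan is to show $|p(t)|^2$ has zero derivative almost everywhere and then use absolute continuity of $p$ to conclude that $|p(t)|$ is constant on $[0,t_0]$.

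First I would compute, using the equation (ii) from Theorem \ref{PMP}, that for almost all $t$
\begin{equation*}
\frac{d}{dt} |p(t)|^2 = 2 \left( p(t), \frac{dp}{dt} \right) = 2 \bigl( p(t), p(t) \times u(t) \bigr).
\end{equation*}
The key observation is then that for any vectors $a, b \in \R^3$ one has $(a, a \times b) = 0$, since $a \times b$ is perpendicular to $a$. Applied with $a = p(t)$ and $b = u(t)$, this gives $\frac{d}{dt}|p(t)|^2 = 0$ almost everywhere.

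Since $p$ is absolutely continuous, so is the scalar function $|p(t)|^2$, and an absolutely continuous function whose derivative vanishes almost everywhere is constant. Therefore $|p(t)|^2$, and hence $|p(t)|$, is constant on $[0, t_0]$.

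There is no real obstacle here; the only point requiring a word of care is justifying that one may differentiate $|p|^2$ and substitute (ii), which is legitimate because $p$ is absolutely continuous and (ii) holds almost everywhere.
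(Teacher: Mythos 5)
Your argument is correct and is essentially identical to the paper's: both compute $\frac{d}{dt}|p(t)|^2 = 2\bigl(p(t), p(t)\times u(t)\bigr) = 0$ using equation (ii) and the orthogonality of $a$ and $a\times b$. You spell out the absolute-continuity step a little more explicitly, but the reasoning is the same.
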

\begin{proof}
\begin{equation*}
\frac{d}{dt} |p(t)|^2 = 2 \left( \frac{dp}{dt} , p(t) \right) = 
2 \left( p(t) \times u(t) , p(t) \right) = 0 .
\end{equation*}
\end{proof}

Note that for our problem the parameter $p_0$ can not be zero, otherwise condition (i) implies that
$\max_{u \in \CC} (p(t), u) = 0$, hence $p(t)$ is proportional to $Z$ for almost all $t$, and so is 
$\frac{dp}{dt}$. However (ii) implies that $\left( \frac{dp}{dt}, p(t) \right) = 0$ and thus
$\frac{dp}{dt} = 0$ and $p(t)$ is a constant multiple of $Z$. Inspecting (ii) again, we conclude that
 $p(t)$ must be zero for almost all $t$, which contradicts the last claim of the theorem.

 In case when the parameter $p_0$ is non-zero, it can be rescaled to any negative value. A convenient choice for us is $p_0 = -\sin^2 (\alpha)$.

 Consider a second basis $\{ S, Q \}$ of the $XY$-plane, where
\begin{equation*}
S = Y \times Z = X - cY \quad \hbox{\rm and} \quad Q = - X \times Z = Y - cX.
\end{equation*}
 Then $(S, X) = (Q, Y) = (S, S) = (Q, Q) = \sin^2 (\alpha)$,  $(S, Y) = (Q, X) = 0$. In this basis $W_+ = (1 + \kappa c) X - (\kappa + c) Y =S - \kappa Q$ and $W_- =  (1 - \kappa c) X + (\kappa - c) Y =S + \kappa Q$.

 According to Theorem \ref{PMP}, the value of $p(t)$ determines the value of $u(t)$ via (i), while by (ii) the value of $u(t)$ determines the evolution of $p(t)$. Let us analyze (i) to see which values of
$p(t)$ are admissible, and what are the corresponding controls $u$.

 Let us write $u = a X + b Y$ and $p = s S + q Q + z Z$. Then
\begin{equation*}
\Ham(p, u) = \sin^2 (\alpha) \left( - \left| a \right| - \kappa   \left| b \right| + sa +qb \right) .
\end{equation*} 

Since the set $\CC$ is closed under symmetry $a \mapsto -a$, $b \mapsto -b$, we see that the maximum in $u \in \CC$ of $H (p, u)$ is attained when $a$ has the same sign as $s$ and $b$ has the same sign as $q$. Hence
\begin{equation*}
\M (p) / \sin^2 (\alpha) = \max_{|a| + |b| = 1} (|s| -1) |a| + (|q| - \kappa) |b| = \max \left\{ |s| - 1, |q| - \kappa \right\} .
\end{equation*}

 By property (i) of the Theorem, $\M (p(t)) = 0$, thus the admissible values of $p(t)$ satisfy
either $|s| = 1$, $|q| \leq \kappa$ or $|q| = \kappa$, $|s| \leq 1$. 
We summarize this in the following Lemma, which  describes controls in the resulting regions:
\begin{lemma}
\label{reg}
(a) Let $\kappa > 0$. 

(i) If $s = 1$, $-\kappa < q < \kappa$ then $a=1$, $b = 0$, the control is $u = X$;

(ii) If $s = -1$, $-\kappa < q < \kappa$ then $a=-1$, $b = 0$, the control is $u = -X$;

(iii) If $q = \kappa$, $-1 < s < 1$ then $a=0$, $b = 1$, the control is $u = Y$;

(iv) If $q = -\kappa$, $-1 < s < 1$ then $a=0$, $b = -1$, the control is $u = -Y$.

\

(b)  If $\kappa = 0$ then $q = 0$, $|s| \leq 1$. When $q=0$ and $-1 < s < 1$
we could have either control $u = Y$ or $u = -Y$. 
\end{lemma}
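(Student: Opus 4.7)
The plan is to pick up directly from the Hamiltonian computation that immediately precedes the Lemma. Writing $u = aX + bY$ with $|a|+|b|=1$ and $p = sS + qQ + zZ$, we already have
\begin{equation*}
\Ham(p,u)/\sin^2(\alpha) = -|a| - \kappa|b| + sa + qb,
\end{equation*}
and the symmetry $u \mapsto -u$ argument already given shows that the maximum over $u \in \CC$ is achieved with $\mathrm{sgn}(a) = \mathrm{sgn}(s)$ and $\mathrm{sgn}(b) = \mathrm{sgn}(q)$. Substituting these sign relations turns the maximization into the elementary linear program of maximizing $(|s|-1)|a| + (|q|-\kappa)|b|$ over $|a|, |b| \geq 0$ with $|a|+|b|=1$. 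The condition (i) from Theorem~\ref{PMP}, namely $\Ham(p(t),u(t)) = \M(p(t)) = 0$, selects the controls we need.

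For part (a), I would dispatch the four cases in parallel. In region (i), the hypotheses give $|s| - 1 = 0$ and $|q|-\kappa < 0$, so the reduced objective is $0 \cdot |a| + (|q|-\kappa)|b|$. Its maximum over the segment $|a|+|b|=1$, $|a|,|b|\ge 0$ is attained uniquely at $|a|=1$, $|b|=0$ (uniqueness comes from the strictly negative coefficient on $|b|$). Combined with $\mathrm{sgn}(a) = \mathrm{sgn}(s) = +1$, this forces $u = X$. Cases (ii)--(iv) are identical up to relabeling: in each one, exactly one of $|s|-1$ and $|q|-\kappa$ vanishes while the other is strictly negative, and the corresponding axis of the segment is selected.

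For part (b), with $\kappa = 0$, the formula $\M(p)/\sin^2(\alpha) = \max\{|s|-1,|q|\}$ forces $|q| \le 0$ (hence $q=0$) and $|s| \le 1$ when this maximum equals zero. Assuming further $-1 < s < 1$, the reduced objective becomes $(|s|-1)|a|$, with maximum at $|a|=0$, $|b|=1$. The one new feature is that because $q=0$, the earlier sign condition $\mathrm{sgn}(b) = \mathrm{sgn}(q)$ is vacuous, so both $b=1$ and $b=-1$ are admissible, giving the stated non-uniqueness.

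There is no serious obstacle; the only thing to be careful about is distinguishing the strict inequalities in the hypotheses ($-\kappa < q < \kappa$, $-1 < s < 1$) from the corner situations where both $|s|=1$ and $|q|=\kappa$ hold simultaneously. In those corners the reduced objective is identically zero on the whole feasible segment, so the Pontryagin condition does not pin the control down to one of the four generators; the strict inequalities in the Lemma's hypotheses are exactly what rules this out and guarantees uniqueness in each of the four regions in part (a).
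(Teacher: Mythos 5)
Your proposal is correct and follows essentially the same approach as the paper: the paper does not give a separate proof of this Lemma, instead stating it as a summary of the Hamiltonian computation that immediately precedes it, and your argument is a fully worked-out version of exactly that reasoning (reduce to the linear program in $|a|,|b|$, use the sign constraint from the symmetry of $\CC$, and observe that in each region the strictly negative coefficient on the unwanted variable forces the unique maximizer). Your closing remark about the corners $|s|=1$, $|q|=\kappa$ matches the paper's paragraph following the Lemma, where any control on the segment between the two generators is allowed.
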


 At the points where two regions meet, the whole segment joining the corresponding two controls is allowed. For example, when $s = 1$ and $q = \kappa > 0$ we could have any control $u = a X + b Y$
with $a, b \geq 0$, $a+b=1$. We will call such values of $p$ {\it critical}.

 If the curve $p(t)$ reaches a critical point, one of three things could happen: the curve $p(t)$ could
cross the boundary of a region, in which case the control will switch; the curve $p(t)$ could return to the same region where it came from without a switch of control; or the curve $p(t)$ may stay inside the critical boundary for some positive time. Let us describe evolution of $p(t)$ inside the critical boundary.

\begin{lemma}
\label{critreg}
(a) Suppose $p(t) = S - \kappa Q + z(t) Z$ for $t \in [t_1, t_2]$. Then $u(t) = W_+$ and $z(t) = 0$
for  $t \in [t_1, t_2]$. 

(b) Suppose $p(t) = S + \kappa Q + z(t) Z$ for $t \in [t_1, t_2]$ and $\kappa > 0$. Then $\kappa \geq c$,  $u(t) = W_-$ and $z(t) = 0$ for  $t \in [t_1, t_2]$.
\end{lemma}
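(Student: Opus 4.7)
The plan is to expand the coadjoint equation $\dot p = p \times u$ from Theorem~\ref{PMP}(ii) in the basis $\{S, Q, Z\}$ for $p$ and the basis $\{X, Y\}$ for $u$, and read off both conclusions from the resulting scalar ODEs. First I would compute the six cross products $S \times X, S \times Y, Q \times X, Q \times Y, Z \times X, Z \times Y$; using $S = X - cY$, $Q = Y - cX$ and $X \times Y = Z$ these are immediate --- for instance $S \times X = cZ$ and $Z \times X = Q$. Substituting $p = sS + qQ + zZ$ and $u = aX + bY$ into $\dot p = p\times u$ then produces a system of the form
\[
\dot s = -zb, \qquad \dot q = za, \qquad \dot z = a(cs-q) + b(s-cq).
\]

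For part (a), substituting the hypothesis $s \equiv 1$, $q \equiv -\kappa$ into the first two equations forces $z(t)a(t) = z(t)b(t) = 0$ almost everywhere. Because the control constraint $|a|+|b|=1$ rules out $a(t)=b(t)=0$, this implies $z \equiv 0$ (by absolute continuity), and then $\dot z = 0$ reduces to the linear constraint $a(c+\kappa) + b(1+c\kappa) = 0$. At this critical corner the admissible controls are convex combinations of $X$ and $-Y$, so $a \geq 0$ and $b \leq 0$; combining with $a - b = 1$ pins $u$ to a positive multiple of $(1+c\kappa)X - (c+\kappa)Y = W_+$, which is the desired conclusion up to a positive rescaling that does not affect the cost.

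For part (b), the same steps with $s \equiv 1$, $q \equiv \kappa$ give $z \equiv 0$ and the linear constraint $a(c-\kappa) + b(1-c\kappa) = 0$. The admissible controls at this corner satisfy $a, b \geq 0$ with $a+b=1$, and since $1 - c\kappa > 0$, this equation admits a nonnegative nontrivial solution only if $\kappa \geq c$, in which case $u$ is a positive multiple of $W_- = (1-c\kappa)X + (\kappa - c)Y$. If $\kappa < c$ the left-hand side is strictly positive for every admissible $(a,b)$, contradicting $\dot z = 0$ and showing that the curve $p$ cannot remain on this critical corner --- which gives the asserted necessity of $\kappa \geq c$.

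The computation itself is short and essentially linear-algebraic once the $\{S,Q,Z\}$-basis ODE system is in hand; the only point to watch is keeping the signs of $a, b$ consistent with the particular corner under consideration, and noting that it is the sharp normalization $|a|+|b|=1$ (rather than $\leq 1$) that forces $z \equiv 0$ at the first stage.
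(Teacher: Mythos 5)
Your proof is correct and follows essentially the same route as the paper: expand $\dot p = p \times u$ in the $\{S, Q, Z\}$ basis, use the constancy of the $S$- and $Q$-components to force $z \equiv 0$, and then read the $Z$-component equation as the linear constraint identifying $u$ with a positive multiple of $W_+$ (resp.\ $W_-$). Your version is slightly more careful than the paper's terse statement — you make explicit that $|a|+|b|=1$ rules out $a=b=0$, and you track the sign constraints on $(a,b)$ at each corner to pin down both $\kappa \geq c$ in part (b) and the sign of the multiple of $W_\pm$ — but the underlying argument is the same.
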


Cases $s(t) = -1$, $q(t) = \kappa$ and $s(t) = -1$, $q(t) = -\kappa$ are analogous, the controls are $u(t) = - W_+$ and $u(t) = - W_-$ respectively and $z(t) = 0$.


\begin{proof}
To prove (a) consider equation (ii) in Theorem \ref{PMP}. We get
\begin{equation*}
\frac{d}{dt} \left( S - \kappa Q + z(t) Z \right) = \left(  S - \kappa Q + z(t) Z \right) \times
\left( a X + b Y \right) .
\end{equation*}

Taking into account that
\begin{equation*}
S \times X = cZ, \quad S \times Y = Z, \quad Q \times X = -Z, \quad Q \times Y = - cZ,
\end{equation*}
we get that
\begin{equation*}
\frac{dp}{dt} = a z(t) Q - b z(t) S + (ca + \kappa a + b + c\kappa b) Z .
\end{equation*}
Since $q(t)$ and $s(t)$ are constant, this implies $z(t) = 0$ for  $t \in [t_1, t_2]$. Then we get
$(c+\kappa) a = - (1+ c\kappa) b$ and $u$ is proportional to $W_+$.

Case (b) is analogous, except that for $\kappa < c$ the segment joining $X$ and $Y$ does not contain a vector proportional to $W_-$.

\end{proof}

\renewcommand{\thesubfigure}{\Alph{subfigure}}

\begin{figure}
\centering
\begin{subfigure}{3in}
\includegraphics[clip=true,trim=40pt 0pt 30pt 0pt]{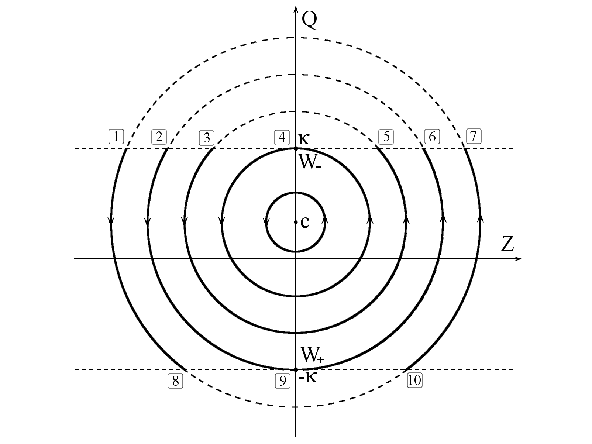} 
\caption{Evolution with $u = X$, $s = 1$}
\label{EX}
\end{subfigure}
\begin{subfigure}{3in}
\includegraphics[clip=true,trim=40pt 0pt 30pt 0pt]{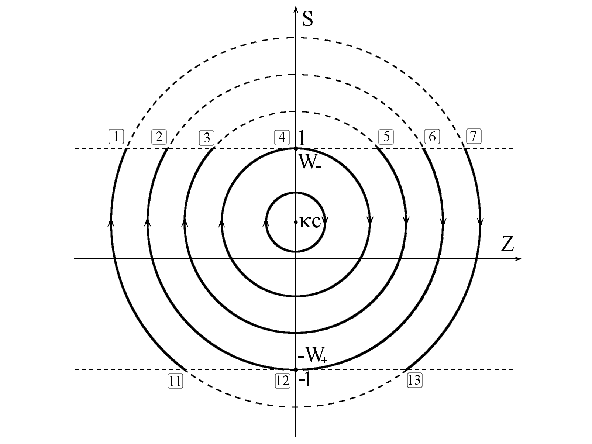} 
\caption{Evolution with $u = Y$, $q = \kappa$}
\end{subfigure} \\
\centering
\begin{subfigure}{3in}
\includegraphics[clip=true,trim=40pt 0pt 30pt 0pt]{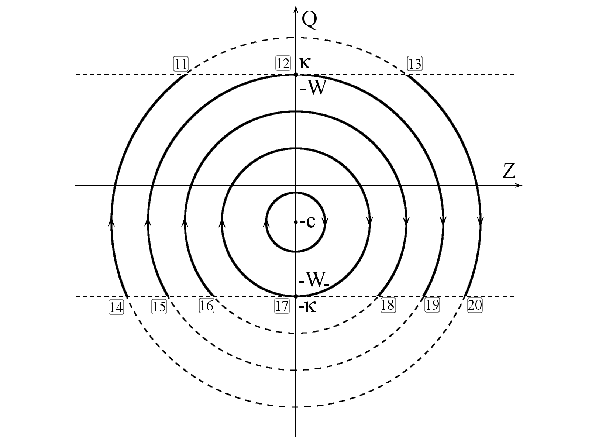} 
\caption{Evolution with $u = -X$, $s = -1$}
\end{subfigure}
\begin{subfigure}{3in}
\includegraphics[clip=true,trim=40pt 0pt 30pt 0pt]{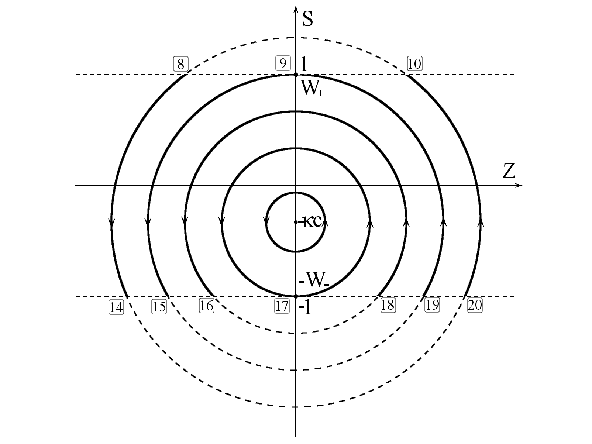} 
\caption{Evolution with $u = -Y$, $q = -\kappa$}
\end{subfigure}
\caption{Case $\kappa > c$.}
\label{kgtc}
\end{figure}

\begin{corollary}
An optimal solution of Problem 2 could only involve controls $\pm X$, $\pm Y$, $\pm W_+$ and $\pm W_-$. Moreover, controls $\pm W_-$
do not occur if $0< \kappa < c$.
\end{corollary}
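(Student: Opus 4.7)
The plan is to derive this corollary by combining the Pontryagin Maximum Principle with Lemmas \ref{reg} and \ref{critreg}; almost all of the work has already been done, and what remains is a careful bookkeeping argument based on the admissible locus of $p(t)$. First I would fix an optimal curve $U$ and invoke Theorem \ref{PMP} to produce the adjoint $p(t) = s(t)S + q(t)Q + z(t)Z$ together with the control $u(t)$; condition (i), combined with the formula for $\M(p)$ derived just before Lemma \ref{reg}, confines $(s(t), q(t))$ to the rectangular boundary $\{|s|=1, |q|\le \kappa\}\cup\{|s|\le 1, |q|=\kappa\}$.

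Next I would partition $[0, t_0]$ into the set $E$ of times where $(s(t), q(t))$ lies in the relative interior of one of the four boundary edges, and its complement $E^c$ of times where $(s(t), q(t))$ sits at a critical corner $(\pm 1, \pm \kappa)$. On $E$, Lemma \ref{reg} pins $u(t)$ uniquely to one of $\pm X$ or $\pm Y$. On $E^c$, since $s$ and $q$ are absolutely continuous and constant on $E^c$, their derivatives vanish at almost every $t \in E^c$; substituting $s'(t) = q'(t) = 0$ into the evolution equation (ii) and matching the $X$ and $Y$ components exactly as in the proof of Lemma \ref{critreg} forces $u(t)$ to be proportional to $W_+$ or $W_-$ depending on which corner is being occupied. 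Consequently $u(t) \in \{\pm X, \pm Y, \pm W_+, \pm W_-\}$ outside a null set, proving the first assertion.

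The restriction on $\pm W_-$ when $0 < \kappa < c$ then follows immediately from part (b) of Lemma \ref{critreg}: the segment of admissible convex controls joining the adjacent $\pm X$ and $\pm Y$ at the corners $(s,q) = (\pm 1, \pm \kappa)$ (same signs) does not contain any vector proportional to $W_-$ unless $\kappa \ge c$, so $p(t)$ cannot linger at such a corner on a set of positive measure. I expect the only genuinely delicate step to be the justification that at critical corners the PMP, which a priori only determines $u(t)$ up to a one-parameter convex segment of maximizers, in fact selects the specific $W_\pm$ direction on every positive-measure subset of $E^c$; the tangent-space computation already carried out in the proof of Lemma \ref{critreg} closes this gap, once one observes that constancy of $s$ and $q$ on $E^c$ forces $\frac{ds}{dt} = \frac{dq}{dt} = 0$ almost everywhere on that set.
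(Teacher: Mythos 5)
Your proof follows the paper's implicit route: the corollary is stated without a separate proof as an immediate consequence of Theorem \ref{PMP} together with Lemmas \ref{reg} and \ref{critreg}, which is exactly the scaffolding you use. Your refinement — partitioning $[0,t_0]$ into $E$ and $E^c$ and invoking the fact that an absolutely continuous function has vanishing derivative a.e.\ on any level set — correctly addresses the possibility that the critical-corner set $E^c$ is not a finite union of intervals, a case Lemma \ref{critreg} as stated (with its hypothesis ``for $t\in[t_1,t_2]$'') does not directly cover. One step is omitted, however: from $s'(t)=q'(t)=0$ a.e.\ on $E^c$ you obtain only $z(t)=0$ a.e.\ on $E^c$; to conclude $u(t)\propto W_\pm$ you must then apply the \emph{same} level-set argument to $z$ to get $z'(t)=0$ a.e.\ on $E^c$, and only then does matching the $Z$-component of the evolution equation (ii) yield $(c+\kappa)a+(1+c\kappa)b=0$ (resp.\ $(c-\kappa)a+(1-c\kappa)b=0$) and hence $u\propto W_+$ (resp.\ $W_-$). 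As written, ``substituting $s'=q'=0$ \dots\ forces $u(t)$ to be proportional to $W_\pm$'' skips this intermediate application. The $W_-$ exclusion for $0<\kappa<c$ is correct: the maximizer segment at the same-sign corners lies in the quadrant $ab\geq 0$, while $(c-\kappa)a+(1-c\kappa)b=0$ has no solution there when $\kappa<c$, so $z'$ cannot vanish and $p$ cannot linger.
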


Note that when $\kappa = c$ we get $W_-$ proportional to $X$. When $\kappa = 0$ we get $W_+ = W_-$.

Next, let us study evolution of $p(t)$ under controls $\pm X$ and $\pm Y$.

As we have seen in Lemma \ref{reg}, control $X$ corresponds to the region $s = 1$, $-\kappa \leq q \leq \kappa$. Let $p(t) = S + q(t) Q + z(t) Z$.
By part (ii) of Theorem \ref{PMP}, evolution of $p(t)$ is given by
\begin{equation*}
\frac{dp}{dt} = \left(  S + q(t) Q + z(t) Z \right) \times X = (c - q(t)) Z + z(t) Q.
\end{equation*} 
From this we get
\begin{equation*}
q^\prime (t) = z(t), \quad z^\prime (t) = - (q(t) - c), \quad s^\prime (t) = 0.
\end{equation*}
Setting $\tilde{q} (t) = q(t) - c$, we get the equations of the harmonic oscillator
\begin{equation*}
\tilde{q}^\prime (t) = z(t), \quad z^\prime (t) = - \tilde{q} (t) 
\end{equation*}
with solutions $q(t) = c + K \sin(t + \theta)$, $z(t) = K \cos(t+\theta)$. We plot the trajectories in $QZ$-plane in Fig.\ref{EX} and \ref{FX}. Similarly, we plot the trajectories for the other regions described in Lemma \ref{reg}.

%

 This gives us the trajectories that satisfy the conditions of Theorem \ref{PMP}. For example, the path 
\hbox{
\mybox{\kern -18pt} $1$ 
{\kern 0pt} $\mapsto$ {\kern -8pt}
\mybox{\kern -18pt} $8$
{\kern 0pt} $\mapsto$ {\kern -8pt}
\mybox{\kern -21pt} $14$
{\kern -1pt} $\mapsto$ {\kern -8pt}
\mybox{\kern -21pt} $11$
{\kern -1pt} $\mapsto$ {\kern -8pt}
\mybox{\kern -18pt} $1$ 
{\kern 2pt}}
corresponds to the decomposition 
$$R(t_1 X) R(-t_2 Y) R(-t_3 X) R(t_4 Y) .$$
When a trajectory reaches a critical point, for example 
\hbox{\mybox{\kern -18pt} $4$ {\kern 2pt}}, 
it could continue from 
\hbox{\mybox{\kern -18pt} $4$ {\kern 2pt}} 
either using evolution with controls $X$, $Y$ or remain at this critical point for some positive time using control $W_-$. 

The conservation law of Lemma \ref{cons} ensures that for the trajectory 
\hbox{
\mybox{\kern -18pt} $1$ 
{\kern 0pt} $\mapsto$ {\kern -8pt}
\mybox{\kern -18pt} $8$
{\kern 0pt} $\mapsto$ {\kern -8pt}
\hbox{\mybox{\kern -21pt} $14$}
{\kern 2pt}}
the points 
\hbox{\mybox{\kern -18pt} $1$  {\kern 2pt}} and  
\hbox{\mybox{\kern -21pt} $14$  {\kern 2pt}}
have equal $Z$-coordinates. The same property holds in other similar cases, and in particular the trajectory that starts at a critical point  
\hbox{\mybox{\kern -18pt} $9$  {\kern 2pt}} and goes to 
\hbox{\mybox{\kern -21pt} $15$  {\kern 2pt}}
will reach the critical point  
\hbox{\mybox{\kern -21pt} $12$  {\kern 2pt}}. 

It follows that for the trajectory 
\hbox{
\mybox{\kern -18pt} $1$ 
{\kern 0pt} $\mapsto$ {\kern -8pt}
\mybox{\kern -18pt} $8$
{\kern 0pt} $\mapsto$ {\kern -8pt}
\mybox{\kern -21pt} $14$
{\kern -1pt} $\mapsto$ {\kern -8pt}
\mybox{\kern -21pt} $11$
{\kern 2pt}}
evolution times for the parts 
\hbox{
\mybox{\kern -18pt} $1$ 
{\kern 0pt} $\mapsto$ {\kern -8pt}
\mybox{\kern -18pt} $8$ 
{\kern 2pt}}
and 
\hbox{
\mybox{\kern -21pt} $14$
{\kern -1pt} $\mapsto$ {\kern -8pt}
\mybox{\kern -21pt} $11$
{\kern 2pt}}
are the same, since the corresponding arcs are symmetric to each other.

 Next we establish the relations between the time parameters in these trajectories (cf. Proposition 2.1 in \cite{B}).

\begin{proposition}
\label{times}
(a) Let $t_X$ be the $X$-evolution time, and $t_Y$ be $-Y$-evolution time for the trajectory
\hbox to 85pt{\mybox{\kern -18pt} $1$ 
{\kern 0pt} $\mapsto$ {\kern -8pt}
\mybox{\kern -18pt} $8$
{\kern 0pt} $\mapsto$ {\kern -8pt}
\mybox{\kern -22pt} $14$
{\kern 2pt}}.
Then 
\begin{equation*}
\tan (t_X /2) = \kappa \tan (t_Y/2) .
\end{equation*}
The same relation holds for the trajectories
 \hbox to 88pt{\mybox{\kern -21pt} $14$ 
{\kern -1pt} $\mapsto$ {\kern -8pt}
\mybox{\kern -21pt} $11$
{\kern -1pt} $\mapsto$ {\kern -8pt}
\mybox{\kern -18pt} $1$
{\kern 2pt}},
 \hbox to 88pt{\mybox{\kern -18pt} $7$ 
{\kern 0pt} $\mapsto$ {\kern -8pt}
\mybox{\kern -21pt} $13$
{\kern -1pt} $\mapsto$ {\kern -8pt}
\mybox{\kern -21pt} $20$
{\kern 3pt}},
\break
 \hbox to 89pt{\mybox{\kern -21pt} $13$ 
{\kern -1pt} $\mapsto$ {\kern -8pt}
\mybox{\kern -21pt} $20$
{\kern -1pt} $\mapsto$ {\kern -8pt}
\mybox{\kern -21pt} $10$
{\kern 2pt}}, etc., with $t_X$ being the time parameter for $\pm X$-evolution and 
$t_Y$ for $\pm Y$-evolution.

(b) Let $\htx$ be the time of evolution for the trajectories involving critical points,
\hbox to 60pt{
\mybox{\kern -18pt} $9$ 
{\kern 0pt} $\mapsto$ {\kern -8pt}
\mybox{\kern -18pt} $6$
{\kern 2pt}},
\hbox to 60pt{
\mybox{\kern -18pt} $2$ 
{\kern 0pt} $\mapsto$ {\kern -8pt}
\mybox{\kern -18pt} $9$
{\kern 2pt}},
\hbox to 60pt{
\mybox{\kern -21pt} $15$ 
{\kern 0pt} $\mapsto$ {\kern -8pt}
\mybox{\kern-21pt} $12$
{\kern 2pt}} 
or
\hbox to 60pt{
\mybox{\kern -21pt} $12$ 
{\kern -1pt} $\mapsto$ {\kern -8pt}
\mybox{\kern -21pt} $19$
{\kern 2pt}}.

 Let $\hty$ be the time of evolution for the trajectories 
\hbox to 60 pt{
\mybox{\kern -21pt} $12$ 
{\kern -1pt} $\mapsto$ {\kern -8pt}
\mybox{\kern -18pt} $2$
{\kern 2pt}},
\hbox to 60pt{
\mybox{\kern -18pt} $6$ 
{\kern 0pt} $\mapsto$ {\kern -8pt}
\mybox{\kern -21pt} $12$
{\kern 2pt}},
\hbox to 60pt{
\mybox{\kern -21pt} $19$ 
{\kern -1pt} $\mapsto$ {\kern -8pt}
\mybox{\kern -18pt} $9$
{\kern 2pt}}
 or
\hbox to 60pt{
\mybox{\kern -18pt} $9$ 
{\kern 0pt} $\mapsto$ {\kern -8pt}
\mybox{\kern -21pt} $15$
{\kern 2pt}}.

Then 
\begin{equation}
\label{timm}
\htx = \arccos \left(  \frac{c - \kappa} {c + \kappa} \right), \quad
\hty = \arccos \left( - \frac{1 - \kappa c}{1 + \kappa c} \right).
\end{equation}

\end{proposition}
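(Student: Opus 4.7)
The key observation from Section~3 is that in each open region the costate $p = sS + qQ + zZ$ evolves by a uniform rotation with angular speed $1$ in a two-dimensional plane centred at the origin of a shifted coordinate system: for $X$-evolution the plane is $(\widetilde{q},z)$ with $\widetilde{q} = q-c$ (and $s \equiv 1$), while for $-Y$-evolution the plane is $(\widetilde{s},z)$ with $\widetilde{s} = s+c\kappa$ (and $q \equiv -\kappa$); the other $\pm X, \pm Y$ regions are analogous. Because the flow is a uniform rotation, the elapsed time on an arc equals the central angle between the endpoints, which I will read off via the elementary chord identity
\begin{equation*}
\tan(t/2) = \frac{\text{half-chord length}}{\text{distance from origin to chord midpoint}}, \qquad t \in (0,\pi].
\end{equation*}

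\textbf{Part (a).} Label the vertices of $1 \mapsto 8 \mapsto 14$ as $P_1 = (1,\kappa,z_1)$, $P_8 = (1,-\kappa,z_2)$, $P_{14} = (-1,-\kappa,z_3)$. Expanding $|p|^2$ in the basis $\{S,Q,Z\}$ (using $(S,Q)=-c\sin^2\alpha$, $(S,Z)=(Q,Z)=0$) gives the conserved quantity $|p|^2 = \sin^2(\alpha)(s^2+q^2+z^2-2csq)$; evaluated at the three vertices this yields
\begin{equation*}
z_2^2 = z_1^2 - 4c\kappa, \qquad z_3^2 = z_1^2,
\end{equation*}
and the ``symmetric arcs'' remark preceding the statement forces $z_3 = z_1$. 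The $X$-arc is the chord from $(\kappa-c,z_1)$ to $(-\kappa-c,z_2)$ in the $(\widetilde q,z)$-plane, with midpoint $(-c,(z_1+z_2)/2)$, so the chord identity gives
\begin{equation*}
\tan^2(t_X/2) = \frac{4\kappa^2 + (z_1-z_2)^2}{4c^2 + (z_1+z_2)^2}.
\end{equation*}
The analogous calculation on the $-Y$-arc (chord from $(1+c\kappa, z_2)$ to $(-1+c\kappa, z_3)$, with $z_3 = z_1$) produces
\begin{equation*}
\tan^2(t_Y/2) = \frac{4 + (z_1-z_2)^2}{4c^2\kappa^2 + (z_1+z_2)^2}.
\end{equation*}
Setting $u = (z_1-z_2)^2$ and $v = (z_1+z_2)^2$, the relation $z_1^2 - z_2^2 = 4c\kappa$ gives the single identity $uv = 16c^2\kappa^2$; substituting this into the expansions of $(4\kappa^2+u)(4c^2\kappa^2+v)$ and $(4c^2+v)(4+u)$ makes it manifest that the former equals $\kappa^2$ times the latter. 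Hence $\tan^2(t_X/2)/\tan^2(t_Y/2) = \kappa^2$, and since $t_X, t_Y \in (0,\pi]$ both half-angle tangents are nonnegative, yielding $\tan(t_X/2) = \kappa\tan(t_Y/2)$. The other trajectories listed inherit the relation via the symmetries $(X,Y) \mapsto \{\pm X, \pm Y\}$.

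\textbf{Part (b).} At a critical point Lemma~\ref{critreg} forces $z = 0$. For the arc $2 \mapsto 9$ under $X$-evolution between the critical point $(1,-\kappa,0)$ and the adjacent switching point $(1,\kappa,z_*)$, the conservation law forces $z_*^2 = 4c\kappa$. Substituting $z_1 = 0$, $z_2 = z_*$ into the chord-midpoint formula gives
\begin{equation*}
\tan^2(\htx/2) = \frac{4\kappa^2 + 4c\kappa}{4c^2 + 4c\kappa} = \frac{\kappa}{c},
\end{equation*}
and the identity $\cos\theta = (1-\tan^2(\theta/2))/(1+\tan^2(\theta/2))$ produces $\cos\htx = (c-\kappa)/(c+\kappa)$, matching~(\ref{timm}). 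An identical computation on a $Y$-arc between a critical point and an adjacent switching point (one endpoint with $z=0$, the other with $z^2 = 4c\kappa$) gives $\tan^2(\hty/2) = 1/(c\kappa)$ and hence $\cos\hty = -(1-c\kappa)/(1+c\kappa)$.

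\textbf{Main obstacle.} The core identity $uv = 16c^2\kappa^2$ is essentially a one-liner; what will require real care is the orientation bookkeeping. For each enumerated trajectory I must verify, by inspection of Fig.~\ref{kgtc}, that the arc is traversed in the sense corresponding to $t \in (0,\pi]$ rather than along the complementary arc of length $2\pi - t$, and that the conservation ambiguity $z_3 = \pm z_1$ resolves as the $+$ sign (the symmetric-arcs case) rather than the $-$ sign. This case analysis is tedious but conceptually straightforward.
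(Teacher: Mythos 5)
Your proposal is correct and follows essentially the same route as the paper: the isosceles-triangle/chord identity for $\tan(t/2)$, the circle-radius relation $z_1^2 - z_2^2 = 4c\kappa$ (which you obtain from the conserved quantity $|p|^2$, equivalent to the paper's statement that both switching points lie on a circle centred at $Q=c$, $Z=0$), and the same algebraic identity $(z_1-z_2)^2(z_1+z_2)^2 = 16c^2\kappa^2$ to close part (a). In part (b) you route through $\tan^2(\htx/2) = \kappa/c$ and convert to cosine, whereas the paper reads $\cos\htx$ directly from the projection onto the $Q$-axis, but this is a cosmetic difference only.
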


\begin{proof}
Consider the trajectory 
\hbox{\mybox{\kern -18pt} $1$ 
{\kern 0pt} $\mapsto$ {\kern -8pt}
\mybox{\kern -18pt} $8$
{\kern 0pt} $\mapsto$ {\kern -8pt}
\mybox{\kern -21pt} $14$
{\kern 2pt}}.
Let $z_1$ and $z_2$ be $Z$-coordinates of the points \hbox{\mybox{\kern -18pt} $1$ {\kern 0pt}},
and 
\hbox{\mybox{\kern -18pt} $8$ {\kern 0pt}} respectively.
Then $z_1$ is also the  $Z$-coordinate of the point
\hbox{\mybox{\kern -21pt} $14$ {\kern 2pt}}.

\begin{figure}
\centering
\begin{subfigure}{3in}
\includegraphics[clip=true,trim=40pt 0pt 30pt 0pt]{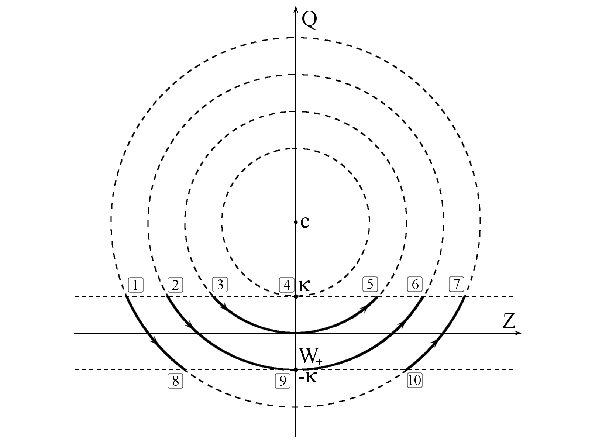} 
\caption{Evolution with $u = X$, $s = 1$}
\label{FX}
\end{subfigure}
\begin{subfigure}{3in}
\includegraphics[clip=true,trim=40pt 0pt 30pt 0pt]{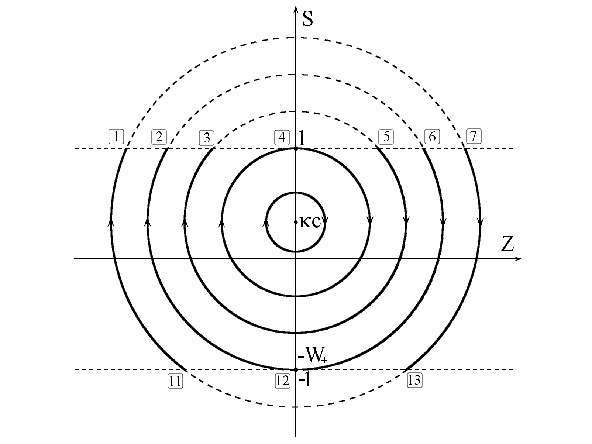} 
\caption{Evolution with $u = Y$, $q = \kappa$}
\end{subfigure} \\
\centering
\begin{subfigure}{3in}
\includegraphics[clip=true,trim=40pt 0pt 30pt 0pt]{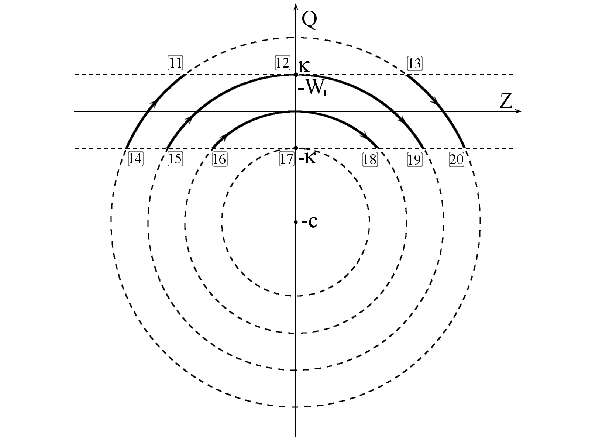} 
\caption{Evolution with $u = -X$, $s = -1$}
\end{subfigure}
\begin{subfigure}{3in}
\includegraphics[clip=true,trim=40pt 0pt 30pt 0pt]{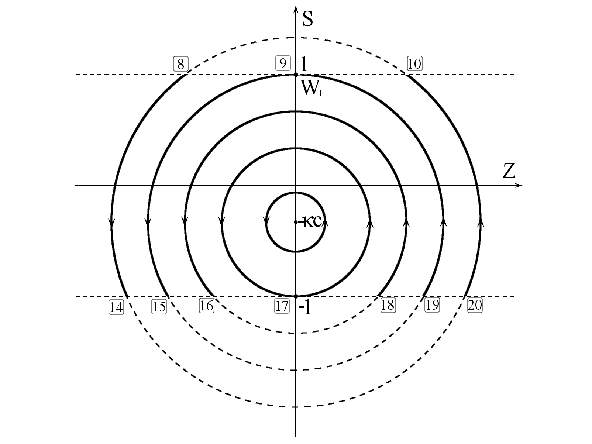} 
\caption{Evolution with $u = -Y$, $q = -\kappa$}
\end{subfigure}
\caption{Case $0< \kappa \leq c$.}
\label{kltc}
\end{figure}

 Since the points \mybox{\kern -19pt} $1$ {\kern 2pt} and
\mybox{\kern -19pt} $8$ {\kern 2pt} lie on a circle with the center at $Z = 0$, $Q = c$,
they satisfy the equation
\begin{equation}
\label{cir}
z_1^2 + (\kappa - c)^2 = z_2^2 + (\kappa + c)^2 .
\end{equation} 
Let $b$ be the base of the isosceles triangle with vertices at 
\mybox{\kern -19pt} $1$ {\kern 2pt},
\mybox{\kern -19pt} $8$ {\kern 2pt}
and the center of the circle, and let $h$ be the altitude in this triangle.
Then
\begin{equation*}
b^2 = (z_2 - z_1)^2 + (2 \kappa)^2
\end{equation*}
and 
\begin{equation*}
h^2 = \left( \frac{z_1+z_2}{2} \right)^2 + c^2 .
\end{equation*}
Since $t_X$ is the angle at the vertex of this triangle, we have
\begin{equation*}
\tan^2\left( \frac{t_X} {2}\right) = \left( \frac{b}{2h} \right)^2 = 
\frac { (z_2 - z_1)^2 + 4 \kappa^2} { (z_2 + z_1)^2 + 4 c^2}.
\end{equation*}
Similarly,
\begin{equation*}
\tan^2\left( \frac{t_Y} {2}\right) = 
\frac { (z_2 - z_1)^2 + 4} {(z_2 + z_1)^2 + 4 \kappa^2 c^2} .
\end{equation*}
Since $\tan (t_X /2), \tan (t_Y /2) > 0$, in order to establish claim (a), we need
to show that  $\tan^2 (t_X /2) = \kappa^2 \tan^2 (t_Y /2)$. 
This equality however follows from (\ref{cir}):
\begin{multline*}
\left(  (z_2 - z_1)^2 + 4 \kappa^2 \right)
\left( (z_2 + z_1)^2 + 4 \kappa^2 c^2 \right) - 
\kappa^2 \left(  (z_2 - z_1)^2 + 4 \right)
\left(  (z_2 + z_1)^2 + 4 c^2 \right) \\
= (\kappa^2 - 1) \left( (z_2 - z_1)^2 (z_2 + z_1)^2 - 16 \kappa^2 c^2 \right) = 0.
\end{multline*} 
The proof for the other cases in (a) is analogous. 

\begin{figure}
\begin{subfigure}{3in}
\includegraphics[clip=true,trim=40pt 0pt 30pt 0pt]{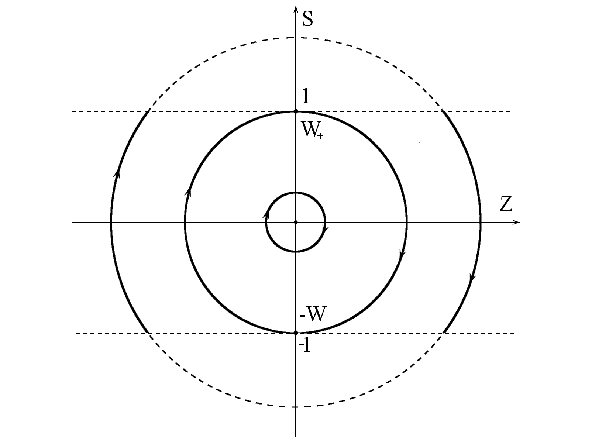} 
\caption{Evolution with $u = Y$, $q = 0$}
\end{subfigure}
\begin{subfigure}{3in}
\includegraphics[clip=true,trim=40pt 0pt 30pt 0pt]{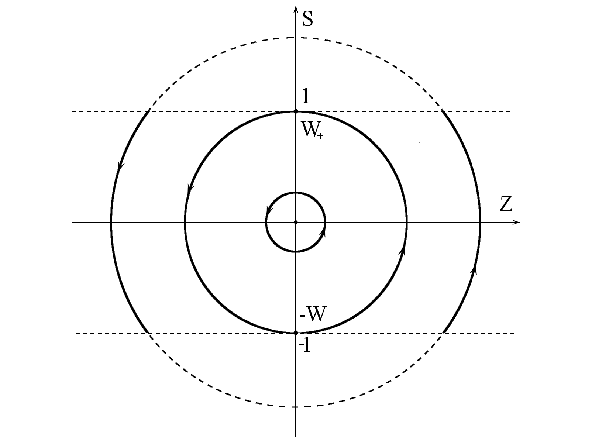} 
\caption{Evolution with $u = -Y$, $q = 0$}
\end{subfigure}
\caption{Case $ \kappa = 0$.}
\label{keqz}
\end{figure}

Let us prove part (b). Consider the trajectory 
\hbox{
\mybox{\kern -18pt} $9$ 
{\kern 0pt} $\mapsto$ {\kern -8pt}
\mybox{\kern -18pt} $6$
{\kern 2pt}}. 
The time parameter $\htx$ is the angle corresponding to this arc of the circle  with center at $Z = 0$, $Q = c$ and radius $\kappa + c$. Taking the projection to $Q$-axis we get
\begin{equation*}
\cos(\htx) =  \frac{c - \kappa} {c + \kappa} .
\end{equation*}
The derivation of the formula for $\hty$ is analogous.
\end{proof}

 The patterns listed in Theorem \ref{Mgt} can be traced on the diagrams in Fig.\ref{kgtc}, while the patterns of Theorem \ref{Mlt} can be seen on Fig. \ref{kltc}.
For example, the pattern given in part (I) of Theorem \ref{Mgt} corresponds to the trajectory
\hbox{\mybox{\kern -21pt} $10$ 
{\kern -1pt} $\mapsto$ {\kern -8pt}
\mybox{\kern -18pt} $7$
{\kern 0pt} $\mapsto$ {\kern -8pt}
\mybox{\kern -21pt} $13$
{\kern -1pt}  $\mapsto$ {\kern -8pt}
\mybox{\kern -21pt} $20$
{\kern -1pt} $\mapsto$ {\kern -8pt}
\mybox{\kern -21pt} $10$
{\kern 2pt}}. 
The above Proposition describes the relations between the time parameters of the evolution. 

To complete the proofs of Theorems \ref{czero} -- \ref{Meq} we need to show that decompositions with a large number of switches can not be optimal. We defer this to the next section.



\section{Bounds on the number of control switches}

In this section we are going to show that certain decompositions are not optimal, even though
they satisfy the necessary conditions of the Pontryagin's Maximum Principle. 
This will give us constraints on the number of control switches in optimal decompositions. 

Rather than doing computations in the group of rotations $SO(3)$, it is easier to carry them  out in the unitary group $SU(2)$, which is a double cover of $SO(3)$:
\begin{equation}
\label{SUSO}
\varphi: \ SU(2) \rightarrow SO(3) .
\end{equation} 

 Let us recall the construction of $SU(2)$ based on the quaternions.
 The algebra of quaternions $\HH$ has a basis $\{ 1, i, j, k \}$ and 
relations $i^2 = j^2 = k^2 = -1$, $ij =-ji = k$, $jk = -kj = i$, $ki = - ik = j$.
Similar to the complex numbers, we have the conjugation on $\HH$, given by
$\overline{1} = 1$, $\overline{i} = -i$, $\overline{j} = -j$, $\overline{k} = -k$, 
and the norm: $| ai + bj + ck + d| = \sqrt{a^2 + b^2 + c^2 + d^2}$. Every non-zero element of $\H$ has a multiplicative inverse given by
$w^{-1} = \overline{w}/{|w|^2}$.

The unitary group $SU(2)$ may be realized as a unit sphere in the quaternion algebra
$\HH$:
\begin{equation*}
SU(2) = \left\{ ai + bj + ck + d \; | \; a^2 + b^2 + c^2 + d^2 = 1 \right\}.
\end{equation*}

The Lie algebra $su(2)$ of the group $SU(2)$ is the tangent space at identity, it is a 3-dimensional subspace in $\HH$ spanned by $\{ i, j, k \}$. 
We are going to identify this Lie algebra with $\R^3$ via $i \mapsto e_1$, $j \mapsto e_2$, $k \mapsto e_3$, where 
$\{ e_1, e_2, e_3 \}$ is the standard basis of $\R^3$.
Since
$[i, j ] = ij - ji = 2k$, $[j,k] = jk - kj = 2i$, $[k, i] = ki - ik = 2j$, we see that two Lie
algebra structures on $\R^3$ coming from $so(3)$ and $su(2)$ differ by a factor of $2$. For this reason
there is a factor of $2$ in the formula for the homomorphism $\varphi$:
\begin{equation*}
\varphi \left( \exp (X) \right) = R (2 X).
\end{equation*}
Here for a vector $X = (a, b, c)^T$ the exponential is computed in the algebra of quaternions 
$\exp (X) = \exp ( ai + bj + ck ) \in SU(2)$.
Note that the rotation operator $R(X)$ is also an exponential:
$R(X) = \exp (\ad (X))$.

The kernel of the homomorphism $\varphi$ is $\{ \pm 1\} \subset SU(2)$, so the map $\varphi$ is 2 to 1.

The advantage of using $SU(2)$ instead of $SO(3)$ is that $SU(2)$ is embedded in a 4-dimensional vector space $\HH$, while $SO(3)$ is embedded into the 9-dimensional space of $3 \times 3$ matrices.

 In our computations we are going to use the Campbell-Hausdorff formula \cite{K} (up to the second order terms):
\begin{equation}
\label{CH}
\exp( \ep A ) \exp (\ep B) = \exp ( C), \quad \hbox{\rm where \ } C = \ep (A+B) 
+ \frac{\ep^2}{2} [A, B] + \oo (\ep^2) .
\end{equation}

 We will also need the conjugation formula:
$C \exp(B) C^{-1} = \exp \left( C B C^{-1} \right)$.

 Pontryagin's Maximum Principle that we use above is essentially a local first derivative test. In order to obtain stronger results, we need to either apply non-local transformations (those that do not come from a small variation of parameters) or use higher derivatives.  In Proposition \ref{epsfive} we will be using the second derivative in order to show that certain decompositions are not optimal.
An example of a non-local transformation is the identity $R(\pi X) = R(-\pi X)$ where
$\left| X \right| = 1$. This trivial observation may be generalized in the following way.
Suppose $R(t_1 X) R(t_2 Y)$ is a rotation in angle $\pi$. Then we get a relation
$R(t_1 X) R(t_2 Y) = R(-t_2 Y) R(-t_1 X)$. Note that both sides of this equality have the same cost. This non-local relation and its consequences will be quite useful for our analysis.

\begin{lemma}
\label{dz}
Let $g = ai + bj + ck + d \in SU(2)$. The image of $g$ in $SO(3)$ is a rotation in angle $\pi$ if and only if $d = 0$.
\begin{proof}
Clearly, $\varphi (g)$ is a rotation in angle $\pi$ if and only if $\varphi (g)^2$ is the
identity matrix, but $\varphi (g)$ is not identity. This is equivalent to $g^2 = \pm 1$,
$g \neq \pm 1$ in $SU(2)$. It is easy to see that the only solutions to $g^2 = 1$ are
$g = \pm 1$. Thus the preimages of rotations in angle $\pi$ are precisely $g \in SU(2)$ with $g^2 = -1$, or equivalently, $g^{-1} = -g$. Since $\left| g \right| = 1$,
this becomes $\overline{g} = -g$. For $g = ai + bj + ck + d$ this is equivalent to $d = 0$.
\end{proof}
\end{lemma}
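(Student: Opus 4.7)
The plan is to translate the geometric condition ``$\varphi(g)$ has rotation angle $\pi$'' into a purely algebraic condition on $g$ inside the quaternion algebra $\HH$, and then read off what that condition forces on the real coordinate $d$.

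First I would observe that a rotation $R \in SO(3)$ has angle $\pi$ precisely when $R^2 = I$ and $R \neq I$, since the square of a rotation by $t$ around a fixed axis is a rotation by $2t$. Pulling this back through the covering $\varphi: SU(2) \rightarrow SO(3)$ with $\ker \varphi = \{\pm 1\}$, the condition becomes: $g^2 \in \{\pm 1\}$ together with $g \notin \{\pm 1\}$.

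The next step is to rule out the branch $g^2 = 1$. Among unit quaternions, $g^2 = 1$ forces $g$ to equal its own inverse, i.e., $g = \overline{g}$; combined with $|g| = 1$ this gives $g = \pm 1$, which is explicitly excluded. Hence the problem reduces to characterizing those $g \in SU(2)$ with $g^2 = -1$.

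Finally, I would rewrite $g^2 = -1$ (using $|g| = 1$) as $g^{-1} = -g$, equivalently $\overline{g} = -g$; since $\overline{g} + g = 2d$, this is exactly the condition $d = 0$. The converse is immediate: if $d = 0$ then $g = ai+bj+ck$ is a pure imaginary unit quaternion and squares to $-(a^2+b^2+c^2) = -1$. I do not expect a genuine obstacle here; the only step worth checking carefully is the reduction $g^2 = 1 \Rightarrow g = \pm 1$, and it follows in one line from $|g|=1$ and $g = \overline{g}$.
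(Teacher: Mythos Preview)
Your proposal is correct and follows essentially the same line as the paper's proof: reduce the angle-$\pi$ condition to $g^2=\pm 1$ with $g\neq\pm 1$, eliminate the $g^2=1$ branch, and read $g^2=-1$ as $\overline{g}=-g$, i.e.\ $d=0$. The only addition is that you spell out the converse direction explicitly, which the paper leaves implicit.
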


\begin{proposition}
\label{flip}
Let $X, Y \in \R^3 = su(2)$. Suppose $|X| = |Y| = 1$ and let $\alpha$ be the angle between $X$ and $Y$. 

(a) If $\tan (s_1) \tan (s_2) = \frac{1}{\cos(\alpha)}$ then 
the image of $\exp (s_1 X) \exp (s_2 Y)$ in $SO(3)$ is a rotation in angle $\pi$.
In this case  $\exp (s_1 X) \exp (s_2 Y) = - \exp (-s_2 Y) \exp (-s_1 X)$.

(b) Let $\tan \psi = \cos \alpha \tan (s_2)$, $-\frac{\pi}{2} < \psi \leq \frac{\pi}{2}$. Then
\begin{equation*}
\exp( s_1 X) \exp (s_2 Y) \exp (s_3 X) = 
\exp(s_1^\prime X) \exp (-s_2 Y) \exp (s_3^\prime X),
\end{equation*}
where $s_1^\prime = s_1 + \psi - \frac{\pi}{2}$,  $s_3^\prime = s_3 + \psi + \frac{\pi}{2}$.
\end{proposition}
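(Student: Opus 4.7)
My plan is to reduce both parts to direct quaternion computations in $SU(2)$, exploiting that for a unit vector $X\in\R^3\cong su(2)\subset\HH$ we have $X^2=-1$, hence $\exp(sX)=\cos(s)+\sin(s)\,X$.

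For part (a), I would expand
\[
\exp(s_1 X)\exp(s_2 Y)=(\cos s_1+\sin s_1\,X)(\cos s_2+\sin s_2\,Y)
\]
in $\HH$. For two pure imaginary unit quaternions with $(X,Y)=\cos\alpha$, a short computation gives $XY=-\cos\alpha+X\times Y$, so the scalar part of the product above equals $\cos s_1\cos s_2-\sin s_1\sin s_2\cos\alpha$. Setting this to zero yields precisely $\tan s_1\tan s_2=1/\cos\alpha$, and by Lemma \ref{dz} this is exactly the condition for $\varphi(\exp(s_1X)\exp(s_2Y))$ to be a rotation by $\pi$. In that case $g:=\exp(s_1 X)\exp(s_2 Y)$ is a unit pure imaginary quaternion, so $\overline g=-g$; combined with $g^{-1}=\overline g$ this gives $g^{-1}=-g$, and expanding $g^{-1}=\exp(-s_2Y)\exp(-s_1X)$ delivers the claimed identity.

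For part (b), the idea is to use (a) to flip the middle factor $\exp(s_2 Y)$. Set $s=\pi/2-\psi$; using $\tan(\pi/2-\psi)=\cot\psi$ and $\tan\psi=\cos\alpha\,\tan s_2$ one immediately verifies $\tan s\cdot\tan s_2=1/\cos\alpha$, so part (a) applies and gives
\[
\exp(sX)\exp(s_2Y)=-\exp(-s_2Y)\exp(-sX),
\]
which rearranges to $\exp(s_2Y)=-\exp(-sX)\exp(-s_2Y)\exp(-sX)$. Plugging this into the left-hand side of the identity in (b) yields
\[
\exp(s_1X)\exp(s_2Y)\exp(s_3X)=-\exp((s_1-s)X)\exp(-s_2Y)\exp((s_3-s)X).
\]
The leading sign is then absorbed using $-\exp(tX)=\exp((t+\pi)X)$, applied to the rightmost factor, and one checks directly that $s_1-s=s_1+\psi-\pi/2=s_1'$ and $s_3-s+\pi=s_3+\psi+\pi/2=s_3'$.

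The main technical point to watch is sign-tracking between $SU(2)$ and $SO(3)$: the $-1$ produced by part (a) is invisible under the covering map $\varphi$ but essential in the quaternion identity, and it is precisely this $-1$ that produces the asymmetry between the shifts $s_1'-s_1=\psi-\pi/2$ and $s_3'-s_3=\psi+\pi/2$ in part (b). Degenerate values of the parameters (for instance $s_2=\pm\pi/2$, where $\tan s_2=\infty$) can be handled either by interpreting the condition of (a) as $\cos s_1\cos s_2=\sin s_1\sin s_2\cos\alpha$ and the assignment $s=\pi/2-\psi$ by continuity, or equivalently by noting that both sides of each identity are analytic in $s_1,s_2,s_3$ so the identities extend from generic parameters to the whole domain.
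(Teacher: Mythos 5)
Your proof is correct and follows essentially the same route as the paper: expand $\exp(s_1X)\exp(s_2Y)$ in quaternions, use the vanishing scalar part criterion (Lemma \ref{dz}) together with $g^{-1}=\overline{g}=-g$ for part (a), and for part (b) conjugate $\exp(s_2Y)$ by $\exp((\pi/2-\psi)X)$ and then absorb the $-1$ into the rightmost $X$-factor using $\exp(\pi X)=-1$. The only cosmetic difference from the paper is the moment at which the sign is absorbed (the paper does it before multiplying by $\exp(s_1X)$ and $\exp(s_3X)$, you do it after), and your remark about degenerate parameter values is a welcome clarification the paper leaves implicit.
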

\begin{proof}
The group $SU(2)$ acts on its Lie algebra $su(2)$ by conjugation, and its center $\{ \pm 1 \}$ acts trivially. This gives the action of $SO(3)$ on $su(2)$, which is the natural action of $SO(3)$ on $\R^3$. Since this action is transitive on pairs of unit vectors with a given angle between them, we may set without loss of generality $X = i$, $Y = i \cos \alpha + j \sin \alpha  $. We complete this to a basis of
$su(2)$ by setting $Z = \frac{1}{2} [X, Y] = k \sin \alpha$. We can easily verify that
\begin{equation}
\label{rel}
\begin{split}
XY = -\cos \alpha + Z, \quad & \quad YX = -\cos \alpha - Z, \\
XYX = Y - 2 \cos \alpha X, \quad & \quad YXY = X - 2 \cos \alpha Y, \\
[Z, X] =  2 Y - 2 \cos \alpha X, \quad & \quad [Z, Y] =  - 2 X + 2 \cos \alpha Y, \\
XZX = Z, \quad & \quad YZY = Z .
\end{split}
\end{equation} 
We also note that $X^2 = -1$ and $\exp (sX) = \cos(s) + X \sin(s)$ and likewise for $Y$.

We have
\begin{multline*}
\exp (s_1 X) \exp (s_2 Y)  = 
\left( \cos(s_1) + X \sin(s_1) \right) \left( \cos(s_2) + Y \sin(s_2) \right) \\
= \left( \cos(s_1) \cos(s_2) - \cos(\alpha) \sin (s_1) \sin(s_2) \right) \\
+ X \sin(s_1) \cos(s_2) + Y \cos(s_1) \sin(s_2) + Z \sin(s_1) \sin(s_2) .
\end{multline*}
Applying Lemma \ref{dz} we establish the claim of part (a).

 Using part (a), we get
\begin{equation*}
\exp (s_2 Y) = - \exp (- \tau X) \exp (-s_2 Y) \exp( - \tau X),
\end{equation*}
where $\tan (\tau) \tan (s_2) = \frac{1} {\cos (\alpha)}$. Set $\psi = \frac{\pi}{2} - \tau$. Then $\tan \psi = \frac{1}{\tan \tau} = \cos \alpha \tan (s_2)$ and
\begin{multline*}
\exp (s_2 Y) =
 - \exp ( (\psi - \frac{\pi}{2}) X) \exp (-s_2 Y) \exp( (\psi - \frac{\pi}{2}) X) \\
= \exp ( (\psi - \frac{\pi}{2}) X) \exp (-s_2 Y) \exp( (\psi + \frac{\pi}{2}) X). 
\end{multline*} 
Multiplying both sides by $\exp (s_1 X)$ on the left and $\exp (s_3 X)$ on the right,
we get the claim of part (b).
\end{proof}

\begin{proposition}
\label{epsthree}
Let $\tan \left| \frac{t_X}{2} \right| = \kappa \tan \left| \frac{t_Y}{2} \right|$ .
Decompositions
$R(t_Y Y) R(t_X X) R(-t_Y Y)$ with $|t_Y| > \frac{\pi}{2}$ 
and  $R(t_X X) R(t_Y Y) R(-t_X X)$ with $|t_X| > \frac{\pi}{2}$ 
are not optimal.
\end{proposition}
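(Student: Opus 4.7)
The strategy is to apply the non-local identity of Proposition \ref{flip}(b) (and its counterpart obtained by interchanging $X$ and $Y$) to rewrite each decomposition in an equivalent form whose total cost is strictly smaller. For both patterns the middle factor has its sign flipped, while the two outer factors are shifted by $\psi \pm \pi/2$ in the half-angle variables; the analysis then reduces to comparing the sums of the new outer magnitudes with the originals.

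For the first pattern $R(t_Y Y) R(t_X X) R(-t_Y Y)$ with $|t_Y| > \pi/2$, I will invoke the $X \leftrightarrow Y$ relabelled version of Proposition \ref{flip}(b), producing $R(t_Y' Y) R(-t_X X) R(t_Y'' Y)$ with $t_Y' = t_Y + 2\psi - \pi$, $t_Y'' = -t_Y + 2\psi + \pi$, and $\tan \psi = c \tan(t_X/2) = c\kappa \tan(t_Y/2)$. The cost of the middle factor is unchanged, so it suffices to show $|t_Y'| + |t_Y''| < 2|t_Y|$. If $|t_Y| + 2\psi \leq \pi$ the sum equals $2(\pi - |t_Y|)$, which is strictly less than $2|t_Y|$ because $|t_Y| > \pi/2$; if $|t_Y| + 2\psi > \pi$ the sum equals $4\psi$, and since $c\kappa < 1$ (as $c < 1$) monotonicity of $\tan$ on $[0,\pi/2)$ yields $\psi < |t_Y|/2$, whence $4\psi < 2|t_Y|$. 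Either way the cost drops strictly.

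For the second pattern $R(t_X X) R(t_Y Y) R(-t_X X)$ with $|t_X| > \pi/2$, I will apply Proposition \ref{flip}(b) directly, giving $R(t_X' X) R(-t_Y Y) R(t_X'' X)$ with $t_X' = t_X + 2\psi - \pi$, $t_X'' = -t_X + 2\psi + \pi$, and $\tan \psi = c \tan(t_Y/2) = (c/\kappa)\tan(t_X/2)$. The parallel case analysis yields a strict reduction either when $|t_X| + 2\psi \leq \pi$ (then the sum equals $2(\pi - |t_X|) < 2|t_X|$) or when $\psi < |t_X|/2$, the latter being equivalent to $c < \kappa$. This handles a large portion of the parameter space in a single stroke.

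The main technical hurdle is the residual regime $c \geq \kappa$ for the second pattern, in which $\psi \geq |t_X|/2$ and the flip alone yields no cost reduction. Here I would augment the flip by exploiting the identity $R(\pi X) = R(-\pi X)$: the computation gives $|t_X''| \geq \pi$, so one may replace $R(t_X'' X)$ by $R((t_X'' - 2\pi)X)$, rewriting the decomposition as $R(aX) R(-t_Y Y) R(-bX)$ with $a, b > 0$ summing to $2|t_X|$ but unequal (except at the boundary $c = \kappa$). This rewritten decomposition is no longer of symmetric conjugation type, so its two outer arcs in the PMP picture of Section 3 lie on circles of different radii, and one can then either iterate Proposition \ref{flip}(b) within an enlarged subword or perform a second-order perturbation of the outer angles using the Campbell--Hausdorff formula (\ref{CH}) to extract a strictly cheaper decomposition. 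Treating the borderline $c = \kappa$, where $a = b$ and any first-order perturbation vanishes, is the subtlest step.
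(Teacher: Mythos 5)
Your handling of $R(t_Y Y)R(t_X X)R(-t_Y Y)$, and of $R(t_X X)R(t_Y Y)R(-t_X X)$ in the subcase $c < \kappa$, is correct and essentially the paper's own argument: lift to $SU(2)$, apply Proposition~\ref{flip}(b) (or its $X\leftrightarrow Y$ relabelling) to flip the middle factor, and split on the sign of the new outer half-angle, with $4\psi < 2|t|$ in the positive case because $\tan\psi$ is a proper fraction of $\tan(|t|/2)$, and $2(\pi - |t|) < 2|t|$ in the negative case.

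The residual regime $c \geq \kappa$ for $R(t_X X)R(t_Y Y)R(-t_X X)$ is a genuine gap in your proposal. First, an arithmetic slip: with $a = t_X' = t_X + 2\psi - \pi$ and $b = 2\pi - t_X'' = \pi + t_X - 2\psi$, one has $a - b = 4\psi - 2\pi$, so $a = b$ precisely when $\psi = \pi/2$ (equivalently $t_X = t_Y = \pi$), not only at $c = \kappa$. Second, and more seriously, the proposed continuations (``iterate Proposition~\ref{flip}(b)'' or ``second-order Campbell--Hausdorff perturbation'') are not executed, and it is not clear that either yields a strict cost reduction from $R(aX)R(-t_Y Y)R(-bX)$ without further input.

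The paper finishes this case by a different, geometric argument that your sketch does not invoke: by Lemma~\ref{cons}, any PMP trajectory of the form $R(t_X X)R(t_Y Y)R(-t_X X)$ has an $X$-circle of radius at least $\kappa + c$ (the middle $Y$-arc must cross the whole strip $|s|\leq 1$, forcing $z^2 \geq 4\kappa c$ at the switch point), and when $\kappa \leq c$ the resulting $X$-arc crossing $|q|\leq\kappa$ subtends at most $\pi - \arccos\bigl((\kappa-c)/(\kappa+c)\bigr) \leq \pi/2$, which is exactly what Fig.~\ref{kltc} displays. Thus $|t_X|>\pi/2$ precludes any admissible costate $p(t)$ and Theorem~\ref{PMP} gives non-optimality directly, with no cost comparison at all. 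If you wanted to salvage the algebraic route, the cleanest closure is to show $R(aX)R(-t_Y Y)R(-bX)$ with $a+b=2t_X$ and $a\neq b$ itself fails the Maximum Principle (both $X$-arcs would be forced to be full crossing arcs of the same circle, hence of equal length), but that observation reduces to the same geometric fact.
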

\begin{proof}
We may assume without loss of generality that $t_X, t_Y > 0$.
Let us begin with the case of $R(t_Y Y) R(t_X X) R(-t_Y Y)$. 
We take its preimage under $\varphi$:
$\exp (s_1 Y) \exp (s_2 X) \exp (-s_1 Y) \in SU(2)$, where
$s_1 = t_Y/2$, $s_2 = t_X/2$, $\tan(s_2) = \kappa \tan (s_1)$, $s_1 > \frac{\pi}{4}$.
 We claim that the decomposition
$\exp (s_1^\prime Y) \exp (-s_2 X) \exp (s_3^\prime Y)$ given by the previous proposition will have a lower cost. Since $|s_2| = |-s_2|$ we need to show that
$|s_1^\prime| + |s_3^\prime| < 2|s_1|$, where
\begin{equation}
\label{sprime}
s_1^\prime = s_1 + \psi - \frac{\pi}{2}, \quad s_3^\prime = -s_1 + \psi + \frac{\pi}{2}. 
\end{equation}
We have $\psi > 0$ and $s_3^\prime > 0$. If $s_1^\prime < 0$ then
\begin{equation*}
|s_1^\prime| + |s_3^\prime| = - (s_1 + \psi - \frac{\pi}{2}) + (-s_1 + \psi + \frac{\pi}{2})
= \pi - 2 s_1 < \frac{\pi}{2} < 2 s_1,
\end{equation*}
and we get that the new cost is lower. 
If $s_1^\prime \geq 0$ then
\begin{equation}
\label{newcost}
|s_1^\prime| + |s_3^\prime| = (s_1 + \psi - \frac{\pi}{2}) + (-s_1 + \psi + \frac{\pi}{2})
= 2\psi .
\end{equation}
Since $\tan \psi = \cos \alpha \tan s_2$ and $\tan s_2 = \kappa \tan s_1$, we get that
$\psi < s_2 \leq s_1$, so the new cost is again lower. 

 We now apply the same approach to 
$R(t_X X) R(t_Y Y) R(-t_X X)$. We again take its preimage
$\exp (s_1 X) \exp (s_2 Y) \exp (-s_1 X)$ in $SU(2)$ and transform it into
 $\exp (s_1^\prime X) \exp (-s_2 Y) \exp (s_3^\prime X)$ using Proposition \ref{flip}. Here $\tan s_1 = \kappa \tan s_2$. The values of $s_1^\prime$, $s_3^\prime$ are still given by (\ref{sprime}) with $\tan \psi = \cos \alpha \tan s_2$. We have $s_3^\prime > 0$ and consider the sign of $s_1^\prime$. The case $s_1^\prime < 0$ is treated in the same way as before. 

When $s_1^\prime \geq 0$ we consider two subcases: $\kappa \leq \cos \alpha$ and $\kappa > \cos \alpha$. If $\kappa \leq \cos \alpha$, the claim of the Proposition follows from the observation that on the diagrams (A), (C) in Fig. \ref{kltc} the arcs
\hbox{
\mybox{\kern -18pt} $1$ 
{\kern 0pt} $\mapsto$ {\kern -8pt}
\mybox{\kern -18pt} $8$ 
{\kern 2pt}},
\hbox{
\mybox{\kern -21pt} $10$
{\kern -1pt} $\mapsto$ {\kern -8pt}
\mybox{\kern -18pt} $7$
{\kern 2pt}},
\hbox{
\mybox{\kern -21pt} $14$
{\kern -1pt} $\mapsto$ {\kern -8pt}
\mybox{\kern -21pt} $11$
{\kern 2pt}}
and
\hbox{
\mybox{\kern -21pt} $13$
{\kern -1pt} $\mapsto$ {\kern -8pt}
\mybox{\kern -21pt} $20$
{\kern 2pt}}
correspond to an angle not exceeding $\frac{\pi}{2}$. 

Let us assume $\kappa > \cos \alpha$. To show that the transformed expression has a lower cost, we need to 
prove that $\psi < s_1$. However $\tan \psi = \cos \alpha \tan s_2 =
\frac{\cos \alpha}{\kappa} \tan s_1$. Since $\frac{\cos \alpha}{\kappa} < 1$, we
get $\psi < s_1$, which completes the proof of the Proposition.
\end{proof}

\begin{proposition}
\label{epsfive}
Let $\delta > 0$ be a small parameter and let $\kappa \neq 0$. Then the decompositions
\begin{equation}
\label{fivedec}
R ( -\delta X) R( t_Y Y) R(t_X X) R(-t_Y Y) R( -\delta X)
\end{equation}
with $t_X, t_Y > 0$, $\tan (t_X/2) = \kappa \tan (t_Y/2)$, and those symmetric to it under $(X, Y) \mapsto \{ \pm X, \pm Y \}$, are not optimal.
\end{proposition}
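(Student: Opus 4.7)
The plan is to exhibit, for every sufficiently small $\delta>0$, a three-factor decomposition of the same element $g(\delta) := R(-\delta X) R(t_Y Y) R(t_X X) R(-t_Y Y) R(-\delta X)$ whose cost is strictly less than that of (\ref{fivedec}); the symmetric variants then follow from invariance under $(X,Y) \mapsto \{\pm X,\pm Y\}$. First I would reduce to the case $0 < t_Y \le \pi/2$: if $t_Y > \pi/2$, Proposition \ref{epsthree} already supplies a cheaper decomposition of the middle block $R(t_Y Y) R(t_X X) R(-t_Y Y)$, which substituted into (\ref{fivedec}) proves non-optimality directly. So henceforth assume $0 < t_Y \le \pi/2$.

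Working in $SU(2)$ via the double cover $\varphi$, set $F(a,b,\gamma) = \exp((a/2)Y)\exp((b/2)X)\exp((\gamma/2)Y)$. Its Jacobian is non-singular at $(t_Y,t_X,-t_Y)$ because $t_X\neq 0$ prevents $F$ from degenerating to a rotation around $Y$, so by the implicit function theorem there are smooth functions $a(\delta),b(\delta),\gamma(\delta)$ with $(a(0),b(0),\gamma(0)) = (t_Y,t_X,-t_Y)$ and $F(a(\delta),b(\delta),\gamma(\delta)) = \hat g(\delta)$, the $SU(2)$-lift of $g(\delta)$. The resulting three-factor decomposition has cost $C(\delta) = \kappa\,a(\delta) + b(\delta) - \kappa\,\gamma(\delta)$, with $C(0) = 2\kappa t_Y + t_X$.

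Matching the $\{1,X,Y,Z\}$ quaternion components of $\hat g(\delta)$ and $F$ to first order in $\delta$ via the Clifford identities (\ref{rel}) yields a linear system whose solution is $\beta := b'(0) = -2\cos t_Y$, $\alpha-\gamma_0 := a'(0)-\gamma'(0) = 2\sin t_Y \cot(t_X/2)$, and $\alpha+\gamma_0 = -2c(1-\cos t_Y)$. Combined with the Pontryagin relation $\tan(t_X/2)=\kappa \tan(t_Y/2)$, which is equivalent to $\kappa\cot(t_X/2) = \cot(t_Y/2)$, this gives
\begin{equation*}
C'(0) = \kappa(\alpha-\gamma_0) + \beta = 4\cos^2(t_Y/2) - 2\cos t_Y = 2,
\end{equation*}
which is exactly the linear growth rate of the cost of (\ref{fivedec}). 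So the discrepancy is not visible at first order and a second-order calculation is essential.

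The main step is to extend the coefficient matching to order $\delta^2$, yielding a linear system for $a_2 := a''(0)$, $b_2 := b''(0)$, $c_2 := \gamma''(0)$ whose right-hand sides are polynomials in the first-order data and in $\cos t_Y,\sin t_Y,\cos(t_X/2),\sin(t_X/2),c,\kappa$. I would then verify that the quantity $C_2 := \kappa a_2 + b_2 - \kappa c_2$ is strictly negative for every admissible pair $(t_X,t_Y)$ with $0<t_Y\le\pi/2$ and $\tan(t_X/2)=\kappa\tan(t_Y/2)$. Granting $C_2<0$,
\begin{equation*}
C(\delta) - (2\delta + 2\kappa t_Y + t_X) = \tfrac{1}{2}C_2 \delta^2 + O(\delta^3) < 0
\end{equation*}
for all sufficiently small $\delta>0$, so the three-factor decomposition strictly beats (\ref{fivedec}). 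The principal obstacle is the algebraic bulk of the second-order match; the change of variables $\xi=(a+\gamma)/2$, $\eta=(a-\gamma)/2$, together with the product-to-sum identity $2\sin s_1 \sin s_3 = \cos\eta - \cos\xi$, should keep the quaternion expansion of $F$ manageable, and the PMP relation should be used to collapse the resulting trigonometric polynomial into a single manifestly negative expression.
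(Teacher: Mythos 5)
Your proposal uses a genuinely different competing family from the paper's, and unfortunately the key claim it leaves unverified ($C_2<0$) is in fact false, so the argument does not go through. The paper compares (\ref{fivedec}) against a nearby \emph{four}-factor decomposition
\[
\exp((s_1+\ep_1)Y)\exp((s_2+\ep_2)X)\exp(-(s_1+\ep_2')Y)\exp(-\ep_3 X)
\]
\emph{constrained to the PMP surface} $\tan(s_2+\ep_2)=\kappa\tan(s_1+\ep_2')$, i.e.\ a genuine pattern-(I) extremal; it then carries out the second-order Campbell--Hausdorff computation to exhibit the quadratic coefficient as a manifestly negative quantity (and treats the degenerate Jacobian case $\px=1,\kappa=1,c=0$ separately by a non-local identity). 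You instead compare against the unique three-factor $YXY$ Euler decomposition of $\hat g(\delta)$. That three-factor is \emph{not} a pattern-(I) subword (at $\delta=0$ its outer angles $t_Y$ exceed its middle angle $t_X$), so there is no reason to expect it to be cheaper than the extremal, and indeed it is not.

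Concretely, take $c=0$, $\kappa=1$, so the PMP relation forces $t_X=t_Y=2s$ with $0<s\le\pi/4$ after your reduction. Writing $\hat g(0)=\exp(sY)\exp(sX)\exp(-sY)= p + qX + rZ$ with $p=\cos s$, $q=\sin s\cos 2s$, $r=-2\sin^2 s\cos s$, one finds
$\hat g(\delta) = (p\cos\delta+q\sin\delta) + (q\cos\delta-p\sin\delta)X + rZ$, and solving $F(a,b,\gamma)=\hat g(\delta)$ gives $a+\gamma\equiv 0$,
\[
\cos\bigl(\tfrac{b}{2}\bigr) = p\cos\delta+q\sin\delta,
\qquad
\tan\bigl(\tfrac{a-\gamma}{2}\bigr) = \frac{-r}{\,q\cos\delta-p\sin\delta\,}.
\]
Differentiating twice at $\delta=0$ yields $b''(0)=8\sin s\cos^3 s$ and $(a-\gamma)''(0)=\dfrac{4\cos s\cos 2s(1+\cos^2 s)}{\sin s}$, hence
\[
C''(0) \;=\; (a-\gamma)''(0)+b''(0) \;=\; \frac{4\cos s\,(3\cos^2 s-1)}{\sin s},
\]
which is \emph{strictly positive} throughout $0<s\le\pi/4$ (where $\cos^2 s\ge\tfrac12>\tfrac13$). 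In other words, your three-factor family is \emph{more} expensive than (\ref{fivedec}) to second order, not less: the cost functional is stationary along the extremal and your perturbation moves in an uphill direction. This is exactly why the paper does not use the plain $YXY$ Euler decomposition but instead tracks a second pattern-(I) extremal --- a four-factor $YXYX$ with the PMP constraint enforced --- which supplies the downhill direction. The first part of your plan (reduction to $t_Y\le\pi/2$ via Proposition~\ref{epsthree}, and the invertibility of the Euler map) is fine, but the heart of the proposition requires choosing the \emph{right} competing family, and the three-factor $YXY$ is the wrong one.
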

\begin{proof}
Let us assume by contradiction that the given decomposition is optimal. 
As before, we take a preimage $\exp (-\ep X) \exp (s_1 Y) \exp(s_2 X) \exp( - s_1 Y) \exp (- \ep X)$, where  $\ep = \delta/2$, $s_1 = t_Y /2$, $s_2 = t_X /2$. 
We shall express the given decomposition in the following way:
\begin{multline}
\label{twoexp}
\exp (-\ep X) \exp (s_1 Y) \exp(s_2 X) \exp( - s_1 Y) \exp (- \ep X) \\
= \exp ((s_1 + \ep_1) Y) \exp((s_2 + \ep_2) X) \exp( - (s_1+\ep_2^\prime) Y) \exp (- \ep_3 X) .
\end{multline}
We are going to solve for $\ep_1$, $\ep_2$, $\ep_2^\prime$ and $\ep_3$ in terms of $\ep$, and show that the new decomposition has a lower cost. Note that the parameters $\ep_2$ and $\ep_2^\prime$ are bound by the relation
$\tan (s_2 + \ep_2) = \kappa \tan (s_1 + \ep_2^\prime)$. 

We will use the Campbell-Hausdorff formula (\ref{CH}) to rewrite both sides of (\ref{twoexp}) in the form
\begin{equation*}
\exp (s_1 Y) \exp(s_2 X) \exp (L) \exp( - s_1 Y).
\end{equation*}
We shall calculate $L$ up to the second order in $\ep$. Applying (\ref{CH}) to the left hand side of (\ref{twoexp}), we get that
\begin{equation}
\label{firstL}
L = L_1 + L_2 + \frac{1}{2} [L_1, L_2] + \oo (\ep^2),
\end{equation}
where
\begin{equation*}
L_1 = \exp (-s_2 X) \exp(-s_1 Y) (- \ep X )  \exp (s_1 Y) \exp(s_2 X),
\end{equation*}
and 
\begin{equation*}
L_2 = \exp(-s_1 Y)  (- \ep X )  \exp (s_1 Y) .
\end{equation*}
Let us carry out the detailed calculations. We shall use the basis $\{ X, Y, Z \}$ 
and relations (\ref{rel}) as in the proof of the Proposition \ref{flip}.
\begin{multline*}
L_2 = - \ep \left( \cos (s_1) - Y \sin (s_1) \right) X  \left( \cos (s_1) + Y \sin (s_1) \right) \\
= -\ep \left( X \cos (2s_1) + Z \sin (2s_1) + 2 c Y \sin^2 (s_1) \right) .
\end{multline*}
Next,
\begin{multline*}
L_1 =  \exp (-s_2 X) L_2 \exp(s_2 X) \\
= - \ep \left( \cos (s_2) - X \sin(s_2) \right) 
\left( X \cos (2s_1) + Z \sin (2s_1) + 2 c Y \sin^2 (s_1) \right)
 \left( \cos (s_2) + X \sin(s_2) \right) \\
= - \ep \big(
X \left( \cos (2s_1) - c \sin (2s_1) \sin (2s_2) + 4 c^2 \sin^2 (s_1) \sin^2 (s_2) \right) \\
+ Y \left( \sin (2s_1) \sin( 2s_2) + 2 c \sin^2 (s_1) \cos (2s_2) \right) 
+ Z \left( \sin (2s_1) \cos (2s_2) - 2c \sin^2 (s_1) \sin (2 s_2) \right) \big) . 
\end{multline*}
Doing the same calculations for the right hand side of (\ref{twoexp}), we get
\begin{equation}
\label{secondL}
L = L_3 + L_4 + L_5 + L_6 + \frac{1}{2} \left( [L_3, L_4] + [L_3 , L_5]
+ [L_3, L_6] + [L_4, L_5] + [L_4, L_6] + [L_5, L_6] \right)  + \oo (\ep^2),
\end{equation}
where
\begin{align*}
& L_3 = \exp (- s_2 X) (\ep_1 Y) \exp (s_2 X), \\
& L_4 = \ep_2 X, \quad L_5 = - \ep_2^\prime Y, \\
& L_6 = \exp ( - s_1 Y ) ( - \ep_3 X) \exp (s_1 Y) .
\end{align*}
We begin by solving (\ref{twoexp}) to the first order in $\ep$. Equating (\ref{firstL}) with (\ref{secondL}) we get:
\begin{multline*}
-\ep X \left(
2 \cos(2 s_1) - c \sin (2 s_1) \sin (2 s_2) + 4 c^2 \sin^2 (s_1) \sin^2 (s_2) \right) \\
- \ep Y \left(
\sin (2 s_1) \sin (2 s_2) + 4 c \sin^2 (s_1) \cos^2 (s_2) \right) 
- \ep Z \left( 2 \sin (2 s_1) \cos^2 (s_2) - 2 c \sin^2 (s_1) \sin (2 s_2) \right) \\
= X \left( 2 \ep_1 c \sin^2 (s_2) + \ep_2 - \ep_3 \cos( 2 s_1) \right) 
+ Y \left( \ep_1 \cos (2 s_2) - \ep_2^\prime - 2 \ep_3 c \sin^2 (s_1) \right) \\
+ Z \left( - \ep_1 \sin (2 s_2) - \ep_3 \sin (2 s_1) \right) .
\end{multline*}

We divide both sides of this equation by $\cos^2 (s_1) \cos^2 (s_2)$, which
allows us to express everything in terms of $\tan (s_1), \tan (s_2)$. Using the relation
$\tan (s_2) = \kappa \tan (s_1)$, we further eliminate $\tan (s_2)$. To make the equations more compact we denote $\tan (s_1)$ by $\px$.
By Proposition \ref{epsthree} we have $0 < \px \leq 1$.

Since we also have the relation $\kappa \tan (s_1 + \ep_2^\prime) = \tan (s_2 + \ep_2)$, we use the Taylor expansion to find the relation between $\ep_2$ and $\ep_2^\prime$ to the first order:
\begin{equation*}
\frac{ \kappa \ep_2^\prime}{\cos^2 (s_1)} =
\frac{\ep_2}{\cos^2 (s_2)} + \oo(\ep_2).
\end{equation*}
Expressing this in terms of $\px$, we get
\begin{equation*}
\ep_2^\prime = \ep_2 \frac {\kappa \px^2 + \kappa^{-1}} {\px^2 + 1} + \oo(\ep_2).
\end{equation*}
Equating the coefficients at $X, Y, Z$, we get a system of equations
\begin{multline*}
2 \ep_1 c (1+\px^2) \kappa^2 \px^2 + \ep_2 (1+\px^2)(1+ \kappa^2 \px^2)
- \ep_3 (1 + \kappa^2 \px^2) (1 - \px^2) 
\hfill
\\
\hfill
= - \ep \left( 2 (1 - \px^2) (1 + \kappa^2 \px^2) - 4 c \kappa \px^2 + 4 c^2 \kappa^2 \px^4 \right) + \oo (\ep), \\
\ep_1 (1+\px^2)(1 - \kappa^2 \px^2) - \ep_2 \kappa^{-1} (1 + \kappa^2 \px^2 )^2
- 2 \ep_3 c \px^2 (1 + \kappa^2 \px^2) 
= - 4 \ep ( \kappa + c) \px^2 + \oo (\ep), \\ 
\hfill
- 2 \ep_1 \kappa \px (1+\px^2) - 2 \ep_3 \px (1 + \kappa^2 \px^2) 
= - \ep \left( 4 \px - 4c \kappa \px^3 \right) + \oo (\ep) .
\hfill
\end{multline*}
The determinant of this system is the jacobian of (\ref{twoexp}) and equals
\begin{equation*}
4 \px (1 + \px^2) (1 + \kappa^2 \px^2)^2 (1 - \kappa^2 \px^4 + 2 c \kappa \px^2
+ c \kappa \px^4 + c \kappa^3 \px^4).
\end{equation*}
Since $0 < \px \leq 1$, $0 < \kappa \leq 1$, $0 \leq c < 1$, we see that the only case when the jacobian vanishes is $\px = 1$, $\kappa =1$, $c = 0$. We will consider this case separately below. In all other cases the jacobian is non-zero, hence by the Implicit Function Theorem, equation (\ref{twoexp}) has a unique solution for small $\ep$.

Solving (\ref{twoexp}) to the second order in $\ep$, we get that the cost of the right hand side of  (\ref{twoexp}) is
\begin{multline*}
\kappa (s_1 + \ep_1) + (s_2 + \ep_2) + \kappa (s_1+\ep_2^\prime) + \ep_3 \\
= 2 \kappa s_1 + s_2 + 2 \ep
- 2 \ep^2 \frac
{\kappa \px (1 - c \kappa \px^2) (2 - \kappa^2 \px^4 + \kappa^2 \px^2 
+ 3 c^2 \px^2 + c^2 \px^4 + 2 c^2 \kappa^2 \px^4)}
{(1+\px^2) (1 - \kappa^2 \px^4 + 2c \kappa \px^2 + c \kappa \px^4 + c \kappa^3 \px^4)} + \oo(\ep^2),
\end{multline*}
which is lower than the cost of the left hand side
$2 \kappa s_1 + s_2 + 2 \ep$.

 For the remaining case $\px = 1$, $\kappa =1$, $c = 0$, we have 
$s_1 = s_2 = \frac{\pi}{4}$.

Applying Proposition \ref{flip} we see that
\begin{equation*}
\exp \left( \frac{\pi}{4} Y \right) 
\exp \left( \frac{\pi}{4} X \right) 
\exp \left( - \frac{\pi}{4} Y \right) =
\exp \left( - \frac{\pi}{4} Y \right) 
\exp \left( - \frac{\pi}{4} X \right) 
\exp \left( \frac{\pi}{4} Y \right), 
\end{equation*}
and thus
\begin{multline*}
\exp \left( - \ep X \right)
\exp \left( \frac{\pi}{4} Y \right) 
\exp \left( \frac{\pi}{4} X \right) 
\exp \left( - \frac{\pi}{4} Y \right)
\exp \left( - \ep X \right) \\
 =
\exp \left( - \ep X \right)
\exp \left( - \frac{\pi}{4} Y \right) 
\exp \left( - \frac{\pi}{4} X \right) 
\exp \left( \frac{\pi}{4} Y \right)
\exp \left( - \ep X \right) , 
\end{multline*}
which is not optimal since it does not correspond to any trajectory in Fig. \ref{kgtc}.

 This completes the proof of the proposition for the decomposition
\break
$R ( -\delta X) R( t_Y Y) R(t_X X) R(-t_Y Y) R( -\delta X)$. The cases of
the decompositions obtained from this one by applying symmetries 
$(X, Y) \mapsto \{ \pm X, \pm Y \}$ are analogous. For example, in the case of $R ( -\delta Y) R( t_X X) R(t_Y Y) R(-t_X X) R( -\delta Y)$,
we use the transformation
\begin{multline}
\label{twoexptwo}
\exp (-\ep Y) \exp (s_1 X) \exp(s_2 Y) \exp( - s_1 X) \exp (- \ep Y) \\
= \exp ((s_1 + \ep_1) X) \exp((s_2 + \ep_2) Y) \exp( - (s_1+\ep_2^\prime) X) \exp (- \ep_3 Y) .
\end{multline}
The cost of the right hand side is
\begin{equation*}
2 s_1 + \kappa s_2 + 2 \kappa \ep
- 2 \ep^2 \frac
{\kappa \px ( 1 - c \kappa \px^2) ( 2 + \px^2 - \kappa^2 \px^4 
+ 3 c^2 \kappa^2 \px^2 + 2 c^2 \kappa^2 \px^4 + c^2 \kappa^4 \px^4)}
{(1 + \kappa^2 \px^2) ( 1 - \kappa^2 \px^4 + c \kappa \px^4 + 2 c \kappa \px^2 + c \kappa^3 \px^4)}
+ \oo (\ep^2),
\end{equation*}
which is lower than the cost $2 s_1 + \kappa s_2 + 2 \kappa \ep$
of the left hand side.
\end{proof}

It follows from Proposition \ref{epsfive} that for the optimal decompositions corresponding to the trajectory 
\hbox{
$\ldots \mapsto$
\mybox{\kern -18pt} $1$ 
{\kern 0pt} $\mapsto$ {\kern -8pt}
\mybox{\kern -18pt} $8$
{\kern 0pt} $\mapsto$ {\kern -8pt}
\mybox{\kern -21pt} $14$
{\kern -1pt} $\mapsto$ {\kern -8pt}
\mybox{\kern -21pt} $11$
{\kern -1pt}  $\mapsto$ {\kern -8pt}
\mybox{\kern -18pt} $1$ 
{\kern 0pt} $\mapsto$ {\kern -8pt}
$\ldots$},
and symmetric to it,
the number of factors is at most $4$. This corresponds to pattern (I) in Theorems
\ref{czero} -- \ref{Mlt}.

Suppose $\kappa > c$. For the trajectory 
\hbox{
$\ldots \mapsto$
\mybox{\kern -18pt} $3$ 
{\kern 0pt} $\mapsto$ {\kern -8pt}
\mybox{\kern -18pt} $5$
{\kern 0pt} $\mapsto$ {\kern -8pt}
\mybox{\kern -18pt} $3$
{\kern 0pt} $\mapsto$ {\kern -8pt}
$\ldots$},
and symmetric to it,
the number of factors is bounded by $2$, since the evolution times 
\hbox{
\mybox{\kern -18pt} $3$ 
{\kern 0pt} $\mapsto$ {\kern -8pt}
\mybox{\kern -18pt} $5$
{\kern 2pt}}
and
\hbox{
\mybox{\kern -18pt} $5$ 
{\kern 0pt} $\mapsto$ {\kern -8pt}
\mybox{\kern -18pt} $3$
{\kern 2pt}}
exceed $\pi$, and optimal decompositions can not have such time parameters. The case of two factors is incorporated in patterns
(III) and (VII) with $t=0$ in Theorems \ref{czero} and \ref{Mgt}.

In the case $0 < \kappa \leq c$, the decompositions corresponding to the trajectory
\break
\hbox{
\mybox{\kern -18pt} $5$ 
{\kern 0pt} $\mapsto$ {\kern -8pt}
\mybox{\kern -18pt} $3$
{\kern 0pt} $\mapsto$ {\kern -8pt}
\mybox{\kern -18pt} $5$
{\kern 2pt}},
could have up to $3$ factors, since $Y$-evolution time
\hbox{
\mybox{\kern -18pt} $5$ 
{\kern 0pt} $\mapsto$ {\kern -8pt}
\mybox{\kern -18pt} $3$
{\kern 2pt}}
exceeds $\pi$, but $X$-evolution time does not exceed $2 \htx$, which is less than $\pi$. This corresponds to pattern (VIII) in Theorem \ref{Mlt}.


To complete the proof of Theorems \ref{czero} -- \ref{Meq}, we need to establish a bound on the number of factors for the trajectories that pass through the critical points. For the critical points $\pm W_-$ with $c \neq 0$ the existence of such a bound  immediately follows from the diagrams in Fig. \ref{kgtc}, since the trajectories connected to these points are full circles and require time evolution of $2 \pi$ to complete the circle, while any evolution with time exceeding $\pi$ is not optimal. 

 In the case of the critical points $\pm W_+$ we need to deal with a 
trajectory
\break
\hbox{
$\ldots \mapsto$
\mybox{\kern -18pt} $9$ 
{\kern 0pt} $\mapsto$ {\kern -8pt}
\mybox{\kern -18pt} $6$
{\kern 0pt} $\mapsto$ {\kern -8pt}
\mybox{\kern -21pt} $12$
{\kern -1pt} $\mapsto$ {\kern -8pt}
\mybox{\kern -21pt} $12$
{\kern -1pt}  $\mapsto$ {\kern -8pt}
\mybox{\kern -21pt} $19$ 
{\kern -1pt} $\mapsto$ {\kern -8pt}
\mybox{\kern -18pt} $9$ 
{\kern 0pt} $\mapsto$ {\kern -8pt}
\mybox{\kern -18pt} $9$ 
{\kern 0pt} $\mapsto$ {\kern -8pt}
\mybox{\kern -21pt} $15$ 
{\kern -1pt} $\mapsto$ {\kern -8pt}
$\ldots$},
and other similar to it. Again, we will establish a bound on the number of switches.

The trajectory
\hbox{  
\mybox{\kern -18pt} $9$ 
{\kern 0pt} $\mapsto$ {\kern -8pt}
\mybox{\kern -18pt} $6$
{\kern 0pt} $\mapsto$ {\kern -8pt}
\mybox{\kern -21pt} $12$
{\kern 2pt}}
corresponds to the product 
\begin{equation*}
R( \htx X) R (\hty Y) \quad \text{with \ }
\htx = \arccos \left(  \frac{c - \kappa} {c + \kappa} \right), \quad
\hty = \arccos \left( - \frac{1 - \kappa c}{1 + \kappa c} \right).
\end{equation*} 
We are going to see that this element of $SO(3)$ is a rotation in angle $\pi$ around an axis, which is orthogonal to $W_+$.

\begin{proposition}
\label{Wconj}
(a) The products $R( \htx X) R (\hty Y)$ and  $R (\hty Y) R( \htx X)$ are both rotations in angle $\pi$.

(b) The following relations hold:
\begin{gather*}
R (\htx X) R (\hty Y) = R ( - \hty Y) R (- \htx X), \\
R (\hty Y) R( \htx X) = R (- \htx X) R (- \hty Y), \\
R (\hty Y) R( \htx X) R ( t W_+) = R (- t W_+) R (\hty Y) R( \htx X), \\
R (\htx X) R (\hty Y) R (- t W_+) = R ( t W_+) R (\htx X) R (\hty Y).
\end{gather*}

(c) Let $\kappa = 0$. Then $R (\pi Y) R  ( t W_+) = R (- t W_+) R (\pi Y)$. 
\end{proposition}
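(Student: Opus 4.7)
My plan is to lift everything to $SU(2)$ and exploit Lemma \ref{dz} and Proposition \ref{flip}, together with the fact that a rotation by $\pi$ around an axis $V$ reverses any vector orthogonal to $V$.

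\textbf{Step 1 (part (a)).} I will compute $\tan(\htx/2)$ and $\tan(\hty/2)$ from (\ref{timm}) using the half-angle identity $\tan^2(\theta/2) = (1-\cos\theta)/(1+\cos\theta)$. A short calculation gives $\tan^2(\htx/2) = \kappa/c$ and $\tan^2(\hty/2) = 1/(\kappa c)$, hence
\[
\tan(\htx/2)\tan(\hty/2) = \frac{1}{c} = \frac{1}{\cos\alpha}.
\]
Applying Proposition \ref{flip}(a) with $(s_1,s_2) = (\htx/2,\hty/2)$ shows that the $SU(2)$ lift $\exp((\htx/2)X)\exp((\hty/2)Y)$ maps under $\varphi$ to a rotation by $\pi$; together with the symmetric pair $(s_1,s_2)=(\hty/2,\htx/2)$ this proves (a).

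\textbf{Step 2 (first two relations of (b)).} These follow at once from Proposition \ref{flip}(a): the identity $\exp(s_1 X)\exp(s_2 Y) = -\exp(-s_2 Y)\exp(-s_1 X)$ in $SU(2)$ descends (the sign is absorbed by the kernel of $\varphi$) to $R(\htx X)R(\hty Y) = R(-\hty Y)R(-\htx X)$, and likewise with the roles of $X,Y$ swapped.

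\textbf{Step 3 (third and fourth relations of (b)).} For $g \in SO(3)$ a rotation by $\pi$ with axis $V$, one has $gWg^{-1} = -W$ for every $W \perp V$, and consequently $gR(tW)g^{-1} = R(-tW)$. Thus it suffices to verify that the axes of $R(\hty Y)R(\htx X)$ and $R(\htx X)R(\hty Y)$ are each orthogonal to $W_+$. I compute the $SU(2)$ lift $h = \exp(s_1 Y)\exp(s_2 X)$ (with $s_1=\hty/2$, $s_2=\htx/2$) using the multiplication rules (\ref{rel}); the scalar part vanishes by Step 1, so the axis is the vector
\[
V \;=\; \cos s_1 \sin s_2\, X + \sin s_1 \cos s_2\, Y - \sin s_1 \sin s_2\, Z.
\]
Using $(X,X)=(Y,Y)=1$, $(X,Y)=c$, $(Z,X)=(Z,Y)=0$, a direct computation gives
\[
(V,W_+) = (1-c^2)\bigl(\cos s_1 \sin s_2 - \kappa \sin s_1 \cos s_2\bigr),
\]
and the right-hand factor vanishes because $\tan s_2 = \kappa\tan s_1$ reduces to $\sqrt{\kappa/c} = \kappa/\sqrt{\kappa c}$. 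Hence $V \perp W_+$ and the third relation follows. The analogous computation for $h' = \exp(s_2 X)\exp(s_1 Y)$ yields $(V',W_+) = (1-c^2)(\sin s_2 \cos s_1 - \kappa \cos s_2 \sin s_1)$, which again vanishes; rewriting $gR(-tW_+)g^{-1}=R(tW_+)$ gives the fourth relation.

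\textbf{Step 4 (part (c)).} When $\kappa=0$ we have $W_+ = X - cY$, and a direct inner-product computation gives $(W_+,Y) = c - c = 0$. Since $R(\pi Y)$ acts on $\R^3$ as $v \mapsto -v + 2(v,Y)Y$, we conclude $R(\pi Y)W_+ = -W_+$, and conjugation yields $R(\pi Y)R(tW_+)R(\pi Y)^{-1} = R(-tW_+)$; using $R(\pi Y)^{-1}=R(\pi Y)$ gives the claim.

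The only place where any genuine bookkeeping is required is Step 3, where I must pick out the axis of the $\pi$-rotation inside $SU(2)$ and verify the orthogonality against $W_+$; once the half-angle identities from Step 1 are at hand, the remaining algebra is straightforward.
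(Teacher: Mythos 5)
Your proof is correct and follows essentially the same route as the paper: lift to $SU(2)$, recognize the $\pi$-rotation via Lemma~\ref{dz} (you reach it slightly more cleanly by invoking Proposition~\ref{flip}(a) together with the half-angle tangent identity $\tan(\htx/2)\tan(\hty/2)=1/c$, whereas the paper computes $\sin\hsx,\cos\hsx,\sin\hsy,\cos\hsy$ explicitly and multiplies out), extract the quaternionic vector part as the rotation axis, and verify orthogonality to $W_+$ so that conjugation reverses $W_+$ and hence $R(tW_+)$. The paper performs the orthogonality check in the $\{S,Q,Z\}$ basis ($W_+ = S-\kappa Q$) with explicit radicals, while you organize it through the relation $\tan s_2 = \kappa\tan s_1$; these are equivalent bookkeeping choices, and your treatment of part (c) matches the paper's observation that $(W_+,Y)=0$ when $\kappa=0$.
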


\begin{proof}
Let us consider a preimage $h = \exp (\hsx X) \exp (\hsx Y)$ in $SU(2)$ for 
$R (\htx X) R (\hty Y)$. Here $\hsx = \htx/2$, $\hsy = \hty/2$. It follows from (\ref{tim}) that
\begin{equation*}
\sin \hsx = \sqrt{\frac{\kappa}{\kappa+c}}, \quad
\cos \hsx = \sqrt{\frac{c}{\kappa+c}}, \quad
\sin \hsy = \frac{1}{\sqrt{1 + \kappa c}},  \quad
\cos \hsy =\sqrt{ \frac{\kappa c}{1 + \kappa c}}. 
\end{equation*}
Then
\begin{multline*}
h = \exp (\hsx X) \exp (\hsx Y) = 
\left( \cos \hsx + X \sin \hsx \right)
\left( \cos \hsy + Y \sin \hsy \right) \\
= \frac{1} {\sqrt{(\kappa + c) (1 + \kappa c)}}
\left(  c \sqrt{\kappa} + X \kappa \sqrt{c}
+ Y  \sqrt{c} + XY \sqrt{\kappa} \right) \\
=  \frac{1} {\sqrt{(\kappa + c) (1 + \kappa c)}}
\left(  X \kappa \sqrt{c} + Y  \sqrt{c} + Z \sqrt{\kappa} \right) .
\end{multline*}
By Lemma \ref{dz}, the image of $h$ in $SO(3)$ is a rotation in
angle $\pi$ and the first equality in part (b) holds. 
It is easy to see that $h$ is orthogonal to $W_+$:
\begin{equation*}
\left( \kappa \sqrt{c} X +  \sqrt{c} Y +  \sqrt{\kappa} Z \big| S - \kappa Q \right) = 0,
\end{equation*}
which implies that the axis of rotation corresponding to $h$ is orthogonal to $W_+$
and also that $h W_+ h^{-1} = - W_+$. Taking the exponential of both sides, we get
that $h \exp (s W_+) h^{-1} = \exp (-s W_+)$, from which the third claim of part (b) follows.

The argument for  $R (\hty Y) R( \htx X)$ is completely analogous.

For $\kappa = 0$ we have $(W_+, Y) = 0$, from which the claim (c) follows.
\end{proof}

\begin{proposition}
\label{singleW}
Suppose an element $g \in SO(3)$ has an optimal decomposition containing factors
$R (t W_+)$ or $R (t W_-)$. Then there is an optimal decomposition for $g$
with a single factor of that type.
\end{proposition}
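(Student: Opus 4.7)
The plan is to coalesce any two $R(tW_+)$ factors (and similarly any two $R(tW_-)$ factors) in an optimal decomposition by using the non-local identities of Proposition \ref{Wconj}, then iterate on the number of such factors.

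First I would verify a structural claim: between two consecutive $W_+$ factors in an optimal decomposition, the intervening $X, Y$ rotations must assemble into a ``bridge'' $B$ for which Proposition \ref{Wconj}(b) supplies a commutation relation $B\, R(tW_+) = R(-tW_+)\, B$. By inspection of the trajectories in Fig.~\ref{kgtc}, Fig.~\ref{kltc} and Fig.~\ref{keqz}, the only way $p(t)$ can leave a $W_+$-critical point and later return (to the same or to the opposite $W_+$-critical point) is via an arc-sequence of exactly this form, namely $R(\hty Y) R(\htx X)$, $R(\htx X) R(\hty Y)$, or their images under the symmetry group $(X, Y) \mapsto \{\pm X, \pm Y\}$; in the degenerate case $\kappa = 0$ the bridge is simply $R(\pm \pi Y)$, covered by Proposition \ref{Wconj}(c). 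The time parameters along such arcs are pinned by Proposition \ref{times}(b), so the bridge is uniquely determined.

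Given a bridge $B$, a fragment of the form
\begin{equation*}
\cdots R(aW_+)\, B\, R(bW_+) \cdots
\end{equation*}
can be rewritten, using the commutation, as
\begin{equation*}
\cdots R(aW_+)\, R(-bW_+)\, B\, \cdots = \cdots R((a-b)W_+)\, B\, \cdots,
\end{equation*}
reducing the $W_+$-factor count by one while leaving every $X, Y$ factor intact. Comparing costs: if $a$ and $b$ share a sign then $|a-b| < |a| + |b|$, strictly lowering the total cost, which contradicts optimality of the original decomposition. Hence $a$ and $b$ must have opposite signs, in which case $|a-b| = |a| + |b|$ and the rewrite produces another optimal decomposition with one fewer $W_+$ factor. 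Iterating arrives at an optimal decomposition with at most one $W_+$ factor.

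The $W_-$ case is handled identically once one has the analogue of Proposition \ref{Wconj} with $W_-$ in place of $W_+$; this analogue is either derived by direct computation at the critical points $(s, q) = (\pm 1, \pm \kappa)$ on the $W_-$ line (possible precisely when $\kappa \geq c$, which is the regime where $W_-$ controls occur by the Corollary following Lemma \ref{critreg}), or obtained via the symmetry transformation that exchanges $W_+$ and $W_-$. The main point of difficulty I anticipate is the structural claim in the first paragraph: enumerating all possible arc-sequences that connect consecutive critical-point visits and confirming each lies in the family covered by Proposition \ref{Wconj} (possibly after a symmetry). Once that enumeration is in place, the cost comparison and the inductive merging are immediate.
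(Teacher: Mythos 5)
Your core idea --- using the conjugation relations of Proposition \ref{Wconj} to slide consecutive $W_+$ factors through the intervening bridge, then merge them, and iterating --- is exactly the paper's argument for the main case $c>0$, $\kappa>0$. Your derivation of the alternating-sign pattern from the cost comparison is a nice addition (the paper assumes this structure rather than deriving it), and your merging identity $R(aW_+)\,B\,R(bW_+) = R((a-b)W_+)\,B$ and cost accounting are correct.

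However, you have overlooked the special case $c = 0$, which requires a genuinely different argument and is in fact the subtlest part of the paper's proof. When $c = 0$ (and $\kappa > 0$), both $W_+$ and $W_-$ factors can occur, and your structural claim about bridges breaks down: the $X$- or $Y$-evolution circle leaving a $W_+$ critical point is centered on the axis ($q = c = 0$) and therefore lands on a $W_-$ critical point after time exactly $\pi$, \emph{not} back at $\pm W_+$. Moreover, the trajectory can then dwell at $W_-$ for an arbitrary positive time before continuing, so the putative bridge $R(\pi X)R(\pi Y)$ is not the only option --- you would need to consider $R(\pi X)R(uW_-)R(\pi Y)$ for all $u \geq 0$, which is no longer a single element you can commute through. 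The paper's resolution is different in kind: it shows a $W_\pm$ factor can only be followed (or preceded) by a \emph{single} rotation with angle strictly less than $\pi$. It does this by observing that if an optimal decomposition had the subword $R(tW_+)R(\pi X)$, then since $R(\pi X) = R(-\pi X)$, the subword $R(tW_+)R(-\ep X)$ would also be optimal, which contradicts the PMP conditions (after a $W_+$ critical point, evolution must proceed with control $+X$, not $-X$). That non-local flip trick does not appear in your plan and is needed to close the $c = 0$ case.

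One smaller comment: for $W_-$ with $c > 0$, you propose deriving a $W_-$ analogue of Proposition \ref{Wconj}. That would work, but it is unnecessary: as noted in the paper immediately before Proposition \ref{Wconj}, the trajectories adjacent to the $\pm W_-$ critical points in Figure \ref{kgtc} are full circles taking time $2\pi$, and since any optimal arc has time at most $\pi$ the trajectory can never return to a $W_-$ critical point. Hence there is trivially at most one $W_-$ factor, with no commutation argument required.
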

\begin{proof}
First we consider the case $c \neq 0$ and $\kappa \neq 0$. We have pointed out above that the factor 
$R (t W_-)$ may only appear when $c < \kappa$ and there will be only one such a factor in that case. In case when $c = \kappa$, we have that $W_-$ is proportional to $X$, and we do not need to consider the factors of the form $R (t W_-)$ at all.

Consider an optimal decomposition with factors $R(tW_+)$.
Without loss of generality assume that the time parameter in the first such factor is positive. 
Then it will necessarily have the form
\begin{equation*}
h_0 R (t_1 W_+) h_1 R (-t_2 W_+) h_2 R (t_3 W_+) h_3 \ldots 
h_{n-1} R ( (-1)^{n-1} t_n W_+) h_n,
\end{equation*}
where $t_k \geq 0$ and for $1 \leq k \leq n-1$
\begin{equation*}
h_k = \begin{cases}
R (\htx X) R (\hty Y) \mbox{ or } R ( - \hty Y) R (- \htx X) \mbox{ if } k \mbox{ is odd,} \\
R (\hty Y) R (\htx X) \mbox{ or } R (- \htx X) R ( - \hty Y) \mbox{ if } k \mbox{ is even.} 
\end{cases}
\end{equation*}
By Proposition \ref{Wconj} we can combine all factors of type $R (t W_+)$ into one without changing the cost:
\begin{equation*}
h_0 R ((t_1 + t_2 + \ldots + t_n) W_+) h_1  h_2 \ldots h_{n-1}  h_n.
\end{equation*}

Next suppose $\kappa = 0$. Using Proposition \ref{Wconj}(c) and applying the above argument we see that there is an optimal decomposition with at most one factor
of type $R(t W_+)$. This completes the proof of Theorem \ref{Meq}.

Now let us consider the case $c = 0$. Here we could have a decomposition that contains factors of both types, $R( t W_+)$ and $R (t W_-)$. Note that 
$\htx = \hty = \pi$. Suppose that an optimal decomposition of $g \in SO(3)$ contains a factor $R (t W_+)$ with $t > 0$. This factor will be followed by either an $X$-evolution or $-Y$-evolution. Let us assume it is $X$-evolution that follows. If the time parameter for $X$-evolution is less than $\pi$, that will be the last factor in the decomposition, as the control switch can not occur. Otherwise, we get $R (t W_+)$ followed by a factor $R (\pi X)$. But this will imply optimality of the expression
$R (t W_+) R (-\pi X)$, which gives a contradiction since $R (t W_+) R (-\ep X)$
can not be optimal since it does not satisfy the necessary conditions for optimality of Theorem \ref{PMP}. All other cases are analogous and we conclude that in case $c=0$ factors  $R( t W_\pm)$ in optimal decompositions may be preceded or followed by just a single
factor $R ( t^\prime X)$ or   $R ( t^\prime Y)$ with $|t^\prime| < \pi$,
thus completing the proof of Theorem \ref{czero}.

%

\end{proof}

Finally, it remains to investigate the factors that could precede/follow $R (t W_+)$ in optimal decompositions. We are going to show that 
the number of such factors is at most two.

\begin{proposition}
\label{fiveW}
Let $c > 0$, $0 < \kappa \leq 1$. Suppose an optimal decomposition for $g \in SO(3)$ contains a factor $R (t W_+)$ with $t > 0$. Then there exists an optimal decomposition of $g$, which is a subword in one of the following: 
\begin{align*}
& R (\hty Y) R (\htx X) R(t W_+) R (\htx X) R (\hty Y), \\
& R (\hty Y) R (\htx X) R(t W_+) R (-\hty Y) R (-\htx X), \\
& R (-\htx X) R (-\hty Y) R(t W_+) R (\htx X) R (\htx Y), \\
& R (-\htx X) R (-\hty Y) R(t W_+) R (-\hty Y) R (-\htx X).
\end{align*}
\end{proposition}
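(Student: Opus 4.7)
The plan is to use Proposition \ref{singleW} to reduce to a single $R(tW_+)$ factor, and then apply the non-local identities of Proposition \ref{Wconj}(b) to bound the number of critical factors on each side.

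By Proposition \ref{singleW} we may assume the optimal decomposition contains exactly one factor $R(tW_+)$ with $t > 0$. Proposition \ref{times}(b) immediately forces the factor directly adjacent to $R(tW_+)$ on either side to be one of $R(\pm\htx X)$ or $R(\pm\hty Y)$. More generally, I would show that every factor other than possibly the very first and the very last is of this critical form. The conservation law from Lemma \ref{cons}, together with the fact that $|W_+|^2 = \sin^2\alpha(1+2c\kappa+\kappa^2)$ differs from $|W_-|^2 = \sin^2\alpha(1-2c\kappa+\kappa^2)$ when $c\kappa > 0$, confines the trajectory to a sphere on which the circle in each region passes only through the critical points $\pm W_+$. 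In each region this circle is tangent to one pair of opposite boundaries at $\pm W_+$ and crosses the other pair transversely at two non-critical switching points; an interior arc flanked by control switches on both sides must therefore end at a critical point $\pm W_+$, for otherwise it would traverse the full major arc between the non-critical switching points, of length $2\htx$ or $2\hty$ exceeding $\pi$, violating the $\pi$-bound on optimal arc lengths.

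The key combinatorial step is to rule out three or more consecutive critical factors on one side of $R(tW_+)$. Using the identities of Proposition \ref{Wconj}(b), any such triple can be rewritten as at most two factors of equal or strictly lower cost. For example,
\begin{equation*}
R(\htx X)\,R(\hty Y)\,R(\htx X) = R(\htx X)\,R(-\htx X)\,R(-\hty Y) = R(-\hty Y),
\end{equation*}
using $R(\hty Y)\,R(\htx X) = R(-\htx X)\,R(-\hty Y)$, strictly reducing cost from $2\htx+\kappa\hty$ to $\kappa\hty$. Mixed-sign triples such as $R(\htx X)\,R(\hty Y)\,R(-\htx X)$ collapse to $R(-\hty Y)\,R(-2\htx X)$, which is no more expensive (and strictly cheaper after reducing modulo $2\pi$ when $2\htx > \pi$). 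The remaining sign patterns are handled by the same identities. Iterating brings the decomposition into an equivalent form with at most two critical factors on each side of $R(tW_+)$.

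Finally, the local trajectory structure near $W_+$ admits exactly two approach directions (the two branches of the circle entering $W_+$) and two departure directions, yielding the four listed combinations; decompositions with fewer factors on one side appear as subwords with the outermost time parameter reduced. The main obstacle is the second paragraph, where I must carefully verify that interior arcs terminate at a critical point; this is the technical heart of the argument and relies on the interplay of the conservation law, the region-by-region circle geometry, and the $\pi$-bound. Once that structural claim is in place, the combinatorial reductions of the third paragraph are a routine case analysis on sign patterns.
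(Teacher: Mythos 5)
Your reduction to a single $W_+$ factor via Proposition \ref{singleW} is the right opening move, and you correctly identify that the remaining task is to bound the number of factors on each side of $R(tW_+)$. But the combinatorial picture in your third paragraph does not match the actual trajectory. After $R(tW_+)$, a continuation of length $\geq 3$ is forced to begin with $R(\htx X)R(\hty Y)$ — or, equivalently by Proposition \ref{Wconj}(b), $R(-\hty Y)R(-\htx X)$ — landing at $-W_+$, which lies on the boundary $s = -1$. From there the only admissible controls are $Y$ and $-X$; control $X$ is simply not available at $s=-1$, so your leading example $R(\htx X)R(\hty Y)R(\htx X)$ never occurs. Any further continuation merges with a neighboring factor of the same control, producing $R(tW_+)R(\htx X)R((\hty+\delta)Y)$ or $R(tW_+)R(-\hty Y)R(-(\htx+\delta)X)$ with $\delta > 0$ — three-factor words with one \emph{non-critical} time parameter, not strings of critical factors. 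The \ref{Wconj}(b) identities are cost-preserving in exactly the relevant cases, so they cannot by themselves establish non-optimality of these words; rewriting $R(\htx X)R(\hty Y)R(-sX)$ as $R(-\hty Y)R(-(\htx+s)X)$ changes nothing about the cost.

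The essential missing ingredient is the non-local flip identity of Proposition \ref{flip}(a). The paper passes to a preimage $\exp(sW_+)\exp(\hsx X)\exp((\hsy+\epsilon)Y)$ in $SU(2)$, chooses $\tau$ with $\tan\tau\tan(\hsy+\epsilon) = 1/c$ (which forces $0 < \tau < \hsx$ since $\tan\hsx\tan\hsy = 1/c$), and rewrites this as $-\exp(sW_+)\exp((\hsx-\tau)X)\exp(-(\hsy+\epsilon)Y)\exp(-\tau X)$. The transformed word has identical cost yet fails the necessary conditions of Theorem \ref{PMP}: the $X$-evolution time immediately following $W_+$ is now $\hsx - \tau$, which is strictly between $0$ and $\hsx$ and therefore cannot correspond to a switching time in Figures \ref{kgtc}--\ref{kltc}. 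Hence the original three-factor word was not optimal, which caps the continuation at two factors. Without this argument your bound is not established. (Your second paragraph's structural claim is also imprecise: the $X$-region circle passing through $W_+$ is tangent at $W_+$ only — $-W_+$ lies on the opposite boundary $s = -1$ and is not on this circle — and the arc of this circle inside the region has total length $2\htx$, which is $\leq \pi$ when $\kappa \leq c$, so the length bound you invoke does not hold uniformly.)
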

\begin{proof}
Let us show that in an optimal decomposition of $g$ the number of factors following $R (t W_+)$ is at most two.
Indeed, if it is followed by
three or more factors, such evolution must begin with either $R (\htx X) R (\hty Y)$
or $R (-\hty Y) R (-\htx X)$. By Proposition \ref{Wconj},  
$R (\htx X) R (\hty Y) =R (-\hty Y) R (-\htx X)$. This will be followed by evolution
with control $-X$ or $Y$. This would imply optimality of either
$R(t W_+) R (\htx X) R ((\hty + \delta) Y)$ or 
\break
$R(t W_+) R (-\hty Y) R (-(\htx + \delta) X)$ for small $\delta > 0$. Let us show that these decompositions are not optimal.               
Consider a preimage $\exp (s W_+) \exp (\hsx X) \exp( (\hsy + \epsilon) Y)$ in $SU(2)$ for
$R(t W_+) R (\htx X) R ((\hty + \delta) Y)$. Here $s = \frac{t}{2} > 0$, 
$\epsilon = \frac{\delta}{2} > 0$. Since $\kappa c > 0$ we get that $\hsy < \frac{\pi}{2}$ and we can assume that  $\hsy + \epsilon < \frac{\pi}{2}$.
Choose $0 < \tau <  \frac{\pi}{2}$ such that $\tan \tau \tan (\hsy + \epsilon) = \frac{1}{c}$. Since $\tan \hsx \tan \hsy = \frac{1}{c}$, we conclude that
$0 < \tau < \htx$. Then by Proposition \ref{flip}(a) we get
\begin{equation*}
\exp (s W_+) \exp (\hsx X) \exp( (\hsy + \epsilon) Y)
= -\exp (s W_+) \exp ((\hsx - \tau) X) \exp(- (\hsy + \epsilon) Y) \exp(- \tau X).
\end{equation*}
However the latter decomposition is not optimal since it does not correspond to a trajectory in Fig. \ref{kgtc}, \ref{kltc}, yet both sides in the above equality have the same cost. This implies that $R(t W_+) R (\htx X) R ((\hty + \delta) Y)$ is not optimal.
The argument for $R(t W_+) R (-\hty Y) R (-(\htx + \delta) X)$ is analogous.
This completes the proof of Proposition \ref{fiveW} and Theorems \ref{Mgt} and \ref{Mlt}.

\end{proof}

\end{document}